\newlist{Henum}{enumerate}{1}
\setlist[Henum,1]{label=\textbf{H\arabic*}, ref=H\arabic*}
\newtheorem{thm}{Theorem}
\newtheorem{lem}[thm]{Lemma}
\newtheorem{remark}{Remark}
\begin{document}

\begin{frontmatter}

\title{Traceability of Water Pollution: An Inversion Scheme Via Dynamic Complex Geometrical Optics Solutions}

\author[a,b]{Lingyun Qiu}
\affiliation[a]{organization={Yau Mathematical Sciences Center, Tsinghua University},
            city={Beijing},
            postcode={100084}, 
            country={China}}

\affiliation[b]{organization={Yanqi Lake Beijing Institute of Mathematical Sciences and Applications},
            city={Beijing},
            postcode={101408}, 
            country={China}}

\author[c,d]{Zhongjing Wang}
\affiliation[c]{organization={Department of Hydraulic Engineering, Tsinghua University},
            city={Beijing},
            postcode={100084}, 
            country={China}}

\affiliation[d]{organization={Breeding Base for State Key Laboratory of Land Degradation and Ecological Restoration in Northwest China, Ningxia University},
            city={Yinchuan},
            postcode={750021}, 
            country={China}}

\author[a,b]{Hui Yu}

\author[a]{Shenwen Yu}

\begin{abstract}
We investigate the identification of the time-dependent source term in the diffusion equation using boundary measurements. This facilitates tracing back the origins of environmental pollutants.
Employing the concept of dynamic complex geometrical optics (CGO) solutions, a variational formulation of the inverse source problem is analyzed, leading to a proof of uniqueness result. 
Our proposed two-step reconstruction algorithm first determines the point source locations and subsequently reconstructs the Fourier components of the emission concentration functions.
Numerical experiments on simulated data are conducted. The results demonstrate that the proposed two-step reconstruction algorithm can reliably reconstruct multiple point sources and accurately reconstruct the emission concentration functions.
Additionally, by partitioning the algorithm into online and offline computations, and concentrating computational demand offline, real-time pollutant traceability becomes feasible.
This method, applicable in various fields - especially those related to water pollution, can identify the source of a contaminant in the environment, thus serving as a valuable tool in environmental protection. 
\end{abstract}

\begin{keyword}
inverse problem \sep parabolic equation \sep complex geometrical optics solutions \sep boundary control \sep point source \sep water pollution

\end{keyword}

\end{frontmatter}

\section{Introduction}\label{section:introduction}

In the field of environmental science, managing water pollution is a significant concern. Water contamination disrupts ecosystems, compromises biodiversity, and poses health risks through fatal waterborne diseases. A pivotal step towards addressing this issue lies in accurately identifying pollution sources, which in turn aids in implementing timely preventative or mitigative actions. 

In this paper, we contribute to this critical endeavor by focusing on an inverse source problem associated with water pollution dispersion. We apply the advection-diffusion-reaction equation to model the pollution dispersion as follows. \begin{equation}\label{full}
    \partial_t u-\kappa \Delta u+v \cdot \nabla u+\gamma u=F(x, t) ,\quad (x,t)\in \Omega \times [0,T],
\end{equation}
where $\Omega$ is the spatial domain, $[0,T]$ is the observation time interval, $\kappa$ is the isotropic diffusion coefficient, $v$ is the convective velocity vector and $\gamma$ is the reaction coefficient. Here, $F(x,t)$ is the pollution source term that varies with space and time, and $u(x,t)$ represents the contaminant's density. 

Our primary objective, which also represents the main challenge of our research, is to determine the source term $F$ using measured boundary data $u|_{\partial \Omega\times[0, T]}$. These measurements span across the full boundary over the observation time interval. Successfully identifying the source term $F$ not only advances the mathematical handling of inverse source problems for parabolic equations but also offers a practical tool for regulators in their fight against water pollution.

Recently, many progresses have been made in the research field on inverse source problems for parabolic equations. The problem involving Dirichlet boundary data is explored in \cite{thanhSpacetimeFiniteElement2021}. The source term is assumed to take the form 
$$
F(x, t)=f(x, t) \phi(x, t)+g(x, t),
$$ where $\phi$ and $g$ are two known functions and $f$, belonging to $L^2(\Omega\times[0,T])$, is the term to be determined. They subsequently derive an optimal control problem with a Tikhonov regularization term to recover $f$. Where $g$ and $\phi$ are provided and $f(x,t)=f(x)$ is a spatial function, the recovery of $F$ is discussed in several works \cite{huangStabilityInverseSource2020, prilepkoInverseSourceProblem2018,husseinDirectInverseSource2020,chengInverseSourceProblem2020, chengContinuationSolutionsElliptic2022}, each based on varying spatial assumptions and measured data. In \cite{huangStabilityInverseSource2020}, the stability of $f$ is constructed by using Carleman estimate. Andrey B. Kostin and his cooperators consider the existence and the uniqueness of the problem in \cite{prilepkoInverseSourceProblem2018,husseinDirectInverseSource2020}, where the measured data is the integral observation $\int_0^T u(t, x) h(t) d t=\varphi(x)$. When $\phi(x, t)=e^{-\gamma t}$ with $\gamma>0$ and $\gamma \neq n^2, n\in \mathbb{N}$, Cheng and liu prove the Hölder stability of $f$ with measurement at two instants on a subdomain of $\Omega$ in \cite{chengInverseSourceProblem2020}. When $\Omega$ is on a hyperplane, the uniqueness of $f$, where the measured data is $u|_{\{\Gamma \subset \partial \Omega\}\times [0,T]}$, is proved in \cite{chengContinuationSolutionsElliptic2022}.  In \cite{linTheoreticalNumericalStudies2022}, given the Neumann boundary data, the authors study the recovery of the semi-discrete source term  
$$F(x, t):=\sum_{k=1}^K p_k(x) \chi_{\left[t_{k-1}, t_k\right)}(t),$$
where $\chi$ is the characteristic function.  Additionally, an inverse random source problem is explored in \cite{wuCarlemanEstimatesStochastic2020}. In general, the aforementioned papers primarily focus on scenarios demonstrating spatial continuity in the source term. Other novel contributions to parabolic inverse source problems also exist. The recovery of reaction coefficient function $\gamma$ is studied in \cite{klibanovConvexificationInverseParabolic2020,bellassouedStablyDeterminingTimedependent2021,chenConvergenceRatesTikhonov2020,fanDeterminingPotentialParabolic2021}. Besides, the initial value problem, that is to recover $u(x,0)$, is discussed in \cite{yangOptimizationMethodMultiparameters2023,linSimultaneousRecoveriesSemilinear2022,casasUsingSparseControl2019a}.  

In the context of addressing water pollution, it is more reasonable to assume that the source term consists of several isolated point sources, whose spatial coordinates are denoted by $s_j$, and the intensity functions depend on time only, characterized by $g_j(t)$. More precisely, we assume that 
\begin{equation}\label{sourceF}
    F(x,t) =  \sum_{j=1}^m g_j(t)\delta (x - s_j).
\end{equation}
Our objective is to determine the number of sources $m$, the intensity functions $g_j(t)$ and the source locations $s_j$. The well-posedness of the direct problem and uniqueness of this inverse problem has been investigated in \cite{AndrleElBadia2012,ElBadiaHa-Duong2002,ElBadiaFarah2010}. To recover the source term, an algebraic identification technique is proposed in \cite{AndrleElBadia2015}. 

Inspired by the widely-utilized adjoint state method \cite{plessixReviewAdjointstateMethod2006}, we consider the weak form of the inverse problem, using some carefully chosen test functions to satisfy the adjoint state equation. These test functions eliminate some interior terms, and the boundary term is obtained from our measurement. The selection of the test functions is crucial to the reconstructive numerical method. In  \cite{AndrleElBadia2015}, the authors use the harmonic polynomials  $v(x,t)= (x_1+{\rm{i}}x_2)^n, n = 0, 1, 2, ...$ in $\mathbb{R}^2$, which are static and independent of time. This will lead to some imperfections in the recovery of intensity functions.
Instead, we adopt the dynamic Complex Geometrical Optics (CGO) solutions.
CGO solutions have been widely applied in inverse problems, as seen in  \cite{uhlmannInverseProblemsSeeing2014, kocyigitApplicationsCGOSolutions2016}. 
To the best of our knowledge, the introduction of dynamic CGO solutions appeared in \cite{choulliLogarithmicStabilityDetermining2018}. These were used to extend the elliptic Carleman estimates in \cite{bukhgeimRECOVERINGPOTENTIALPARTIAL2002} to a hyperbolic Carleman inequality.
To recover the intensity functions, it is advantageous to use test functions in a time-variant form, thus making the dynamic CGO solutions our preferred choice. This method shows significant advantages over existing approaches.

After using the test functions, the weak form of the equation still remains an interior integration term. Boundary control theory is applied for its computation. Theoretically, the existence of boundary control is studied maturely, as seen in \cite{CarthelEtAl1994}. However, many computational difficulties are to be resolved, which are discussed in \cite{munchNumericalApproximationNull2010a}. In this paper, we derive a new approximation way to compute a large number of boundary controls, which both saves computational time and guarantees accuracy.

The paper is organized in the following way. In Section \ref{section:formulation}, the variational form of the inverse source problem using dynamic CGO solutions is derived. The uniqueness and stability results are presented in Section \ref{section:uniqueness}. A numerical method, consisting of a novel approximation method of boundary control and the recovery of the source term, is proposed in Section \ref{section:algorithm}. Section \ref{section:results} presents the numerical results of the algorithm. The conclusion and the future work are discussed in Section \ref{section:conclusion}.

\section{The inverse source problem and its variational formulation}\label{section:formulation}
\subsection{The direct problem}
In this paper, we consider the basic situation where $\kappa$, $\gamma$ and $v$ in \eqref{full} are constants. 
In fact, by introducing the transformation
\[
u(x,t) = \exp\left(\frac{v}{2\kappa}\cdot x-\left(\gamma+\frac{v\cdot v}{4\kappa}\right)t\right) u_1(x,t),
\]
the function $u_1(x,t)$ satisfies the heat equation
\begin{equation*}
    \partial_t u_1 - \kappa\Delta u_1= \exp\left(-\frac{v}{2\kappa}\cdot x+\left(\gamma+\frac{v\cdot v}{4\kappa}\right)t\right)F ,\quad (x,t)\in \Omega \times [0,T],
\end{equation*}

For such a setting, the original problem boils down to the diffusion process of a pollutant.
More precisely, the dynamics of the water pollution is governed by the heat equation in $\mathbb{R}^d$, where $d\in \mathbb{N}$, subject to certain initial and boundary conditions:
\begin{align}\label{directmodel}
\left\{
\begin{array}{ccll}
\partial_t u - \kappa\Delta u&=& F &\text{ in } \,Q, \\
u(x,0) &=& 0 &\text{ in } \,\Omega, \\
\kappa\frac{\partial u}{\partial n} &=& 0 &\text{ on } \Sigma, \\
u(x,t) &=& \varphi(x,t) &\text{ on } \Sigma,
\end{array}
\right.
\end{align}
where $\kappa$ is the chemical diffusivity, $u(x,t)$ is the contaminant concentration, and $F(x,t)$ is the contaminant source.
Here $\Omega \subset \mathbb{R}^d$ stands for a bounded $C^2$-domain, with $n$ representing the outward normal vector to its boundary, $\partial\Omega$. The observation spans the time interval $(0, T)$. The combined space-time domains are represented by $Q = \Omega \times (0,T)$ and $\Sigma = \partial\Omega \times (0,T)$. Specifically, the contaminant source is characterized by:
\begin{align}\label{source}
F(x,t) = F(\theta) = \sum_{j=1}^m g_j(t)\delta (x - s_j),
\end{align}
Here, $m$ denotes the number of sources. The time-dependent intensity and position of the $j$-th point source are represented by $g_j(t)$ and $s_j \in \mathbb{R}^d$, respectively. The comprehensive information about the contaminant source is encapsulated in the set $\theta=\{m,(s_j,g_j)_{1\leq j\leq m}\}$.
Without loss of generality, we assume that

\begin{Henum}
    \item\label{H1}
    The point source positions \( s_j \)'s are distinct.
    \item\label{H2}
    The source intensities \( g_j \in L^2(0,T) \) are nonzero and real functions. Also, they are inactive after a given time \( T^* \in (0,T) \).
    In other words,
    \[
    \exists\, T^* \in (0,T) \text{ such that } g_j(t) = 0 \quad\forall t \in (T^*,T) \text{ and } \forall j \in \{1,2,\ldots, m\}. 
    \]
\end{Henum}
In this paper, we neglect the water depth and consider the inverse source problem in $\mathbb{R}^2$.

As shown in \cite{AndrleElBadia2012}, if $u$ is the solution of \eqref{directmodel} subject to the assumptions \ref{H1} and \ref{H2}, then $\varphi=u|_{\Sigma} \in L^2(\Sigma)$.

\subsection{The inverse problem}

Due to the special form of the source term \eqref{source}, we propose to employ the variational formulation of the problem. The following notations involving $T^*$ will be used:
\begin{align*}
&Q^{-} = \Omega \times (0,T^*), \quad \,\Sigma^{-} = \partial\Omega\times (0,T^*),\\
&Q^{+} = \Omega \times( T^*,T), \quad \Sigma^{+} = \partial\Omega\times (T^*,T).
\end{align*}
The integration with the test function $v(x,t)$ over $Q^{-}$ leads to
\begin{equation}\label{Integration}
    \int_{Q^{-}}\left(\partial_t u-\kappa \Delta u\right) v d x d t=\sum_{j=1}^m \int_0^{T^*} g_j(t) v\left(s_j, t\right) d t .
\end{equation}

Define the test function space:
\begin{align*}
        \mathcal{V} = \big\{ v \in L^2((0,T);H^1(\Omega))\big| \partial_t v + \kappa\Delta v = 0 \text{ in } Q\big\}.
\end{align*}
 
Integrating by parts, we have:

\begin{lem} \label{lemma:Rv}
For any $v \in \mathcal{V}$, we have 
\begin{equation*}
\int_\Omega u(x,T^*)v(x,T^*)\,dx + \kappa\int_{\Sigma^-}\varphi\frac{\partial v}{\partial n}\,d\sigma = \sum_{j=1}^m \int_0^{T^*} g_j(t)v(s_j,t)\,dt.
\end{equation*}
\end{lem}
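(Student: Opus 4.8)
The plan is to derive the identity from \eqref{Integration} by integrating its left-hand side by parts — once in the time variable and twice in the space variables — and then cancelling every interior term by means of the defining property of $\mathcal{V}$.

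First I would handle the time derivative: integrating $\int_{Q^-}(\partial_t u)\,v\,dx\,dt$ by parts in $t$ and invoking the homogeneous initial condition $u(x,0)=0$ from \eqref{directmodel} yields
\[
\int_{Q^-}(\partial_t u)\,v\,dx\,dt = \int_\Omega u(x,T^*)\,v(x,T^*)\,dx - \int_{Q^-} u\,\partial_t v\,dx\,dt .
\]
Next I would move the Laplacian onto the test function by Green's second identity on $\Omega$,
\[
-\kappa\int_{Q^-}(\Delta u)\,v\,dx\,dt = -\kappa\int_{\Sigma^-}\frac{\partial u}{\partial n}\,v\,d\sigma + \kappa\int_{\Sigma^-} u\,\frac{\partial v}{\partial n}\,d\sigma - \kappa\int_{Q^-} u\,\Delta v\,dx\,dt .
\]
In the first boundary integral the Neumann condition $\kappa\,\partial u/\partial n=0$ on $\Sigma$ makes the term vanish, while in the second the Dirichlet data $u=\varphi$ on $\Sigma$ turns it into $\kappa\int_{\Sigma^-}\varphi\,(\partial v/\partial n)\,d\sigma$. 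Adding the two displays, the interior terms collapse to $-\int_{Q^-} u\,(\partial_t v+\kappa\Delta v)\,dx\,dt=0$ because $v\in\mathcal{V}$, leaving
\[
\int_{Q^-}(\partial_t u-\kappa\Delta u)\,v\,dx\,dt = \int_\Omega u(x,T^*)\,v(x,T^*)\,dx + \kappa\int_{\Sigma^-}\varphi\,\frac{\partial v}{\partial n}\,d\sigma ,
\]
and equating this right-hand side with that of \eqref{Integration} gives the claim.

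The delicate point is not the algebra but the regularity of $u$. Because the source \eqref{source} is a sum of Dirac masses in the spatial variable, $u$ is only a weak (transposition) solution, so that $\Delta u$, $\partial u/\partial n$, the trace on $\Sigma$, and the snapshot $u(\cdot,T^*)$ are not a priori classical objects, and the integrations by parts above are at this stage only formal. I would make them rigorous by first establishing the identity for smooth source data — or for mollified point sources $g_j(t)\rho_\varepsilon(x-s_j)$ — where every manipulation is legitimate, and then passing to the limit, relying on the well-posedness and trace regularity of \eqref{directmodel} under \ref{H1}--\ref{H2} proved in \cite{AndrleElBadia2012} (in particular $u|_\Sigma=\varphi\in L^2(\Sigma)$, and the corresponding parabolic regularity giving $u(\cdot,T^*)\in L^2(\Omega)$), together with the smoothness of the CGO test functions $v$, which makes each pairing above well defined. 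Alternatively, one may simply take \eqref{Integration} together with this integration-by-parts formula as the very definition of the weak solution, in which case the lemma holds by construction.
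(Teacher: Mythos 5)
Your proof is correct and takes essentially the same route the paper intends: the paper states this lemma with only the words ``Integrating by parts'' and no further detail, and your computation (time integration by parts with $u(x,0)=0$, Green's identity in space, the Neumann and Dirichlet data on $\Sigma^-$, and cancellation of the interior term via $\partial_t v+\kappa\Delta v=0$) is exactly that omitted calculation. Your closing remarks on the regularity of $u$ and the need to justify the formal manipulations for Dirac sources are a sensible addition beyond what the paper records.
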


Define a functional $\mathcal{R}(v)$ by
\begin{equation}\label{functionalR}
\mathcal{R}(v) := \int_\Omega u(x,T^*)v(x,T^*)\,dx + \kappa\int_{\Sigma^-}\varphi\frac{\partial v}{\partial n}\,d\sigma.
\end{equation}

The second term comes directly from measured boundary data $\varphi$. 
We will obtain the first term using the boundary controllability theorem and the details are provided in Section \ref{Sec_bdrycontrol}.
Lemma \ref{lemma:Rv} transforms the inverse source problem into the weak form:
\begin{equation}\label{internal}
    \mathcal{R}(v) = \sum_{j=1}^m \int_0^{T^*} g_j(t)v(s_j,t)\,dt \qquad \forall v \in \mathcal{V}.
\end{equation}

We will demonstrate that a special subset of $\mathcal{V}$ is enough to recover the source.
Given certain test function space $\mathcal{H}\subset \mathcal{V}$, we define the forward operator $\mathcal{P}$ as:
\begin{equation}\label{operator}
    \mathcal{P}_{\mathcal{H}}: \theta=\{m,(s_j, g_j)_{1 \leq j \leq m}\} \to \big(\mathcal{R}(v)\big)_{v \in \mathcal{H}}.
\end{equation}
Then the variational inverse problem can be formulated as follows.

{\it The Variational Inverse Problem: }

Given $\big(\mathcal{R}(v)\big)_{v \in \mathcal{H}}$, find $\theta=
\{m,(s_j, g_j(t))_{1\leq j \leq m}\}$, i.e., 
the number of sources $m$, their fixed positions $s_j$, and their time-dependent intensities $\{g_j\}_{j=1}^m$, such that 
\begin{equation}
\mathcal{P}_{\mathcal{H}}\left(\theta\right)=\big(\mathcal{R}(v)\big)_{v \in \mathcal{H}}.
\end{equation}

\subsection{The computability of \texorpdfstring{$\mathcal{R}(v)$}{Rv}}\label{Sec_bdrycontrol}
Note that $u(x,T^*)$ may not be available in applications. In order to evaluate $\mathcal{R}(v)$ through the measured boundary data $\varphi$, we need to transform the domain integral $\int_\Omega u(x,T^*)v(x,T^*)\,dx$ to a boundary integral. We employ the boundary controllability theory to do this; see more details in \cite{CarthelEtAl1994}.
For the sake of completeness, we state the main results in this section.
In fact, we only need to evaluate 
\begin{equation}\label{uvstar}
\int_\Omega u(x,T^*)v(x,T^*)\,dx \quad \text{ for any given test function } v(x,t) \in \mathcal{H}.
\end{equation} 
We formulate a suitable boundary controllability problem to obtain the above integration. 
It is stated in two lemmas below.
\begin{lem}\label{lemmabdrycontrol}
    For each given $p\in H^1(\Omega)$, there exists a pair of functions $(\psi, \omega)$ with $\psi \in L^2\big((T^*,T);H^{1}(\Omega)\big)$, $\omega \in L^\infty(\Sigma^+)$ which solves the following system
\begin{subequations}\label{sys_psi_ini}
\begin{numcases}{}
\partial_t\psi + \kappa \Delta\psi = 0 & in $\,Q^+$,\\  
\kappa\frac{\partial \psi}{\partial n} = \omega & on  $\Sigma^+$, \\
\psi(x,T) = 0 & in  $\,\Omega$, \\
\psi(x,T^*) = p & in  $\,\Omega$.
\end{numcases}
\end{subequations}
\end{lem}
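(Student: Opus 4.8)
The plan is to read Lemma~\ref{lemmabdrycontrol} as a boundary controllability statement and to construct the pair $(\psi,\omega)$ by the Hilbert Uniqueness Method (HUM) in duality form, following the framework of \cite{CarthelEtAl1994}. First I would fix the direction of solvability: the equation $\partial_t\psi+\kappa\Delta\psi=0$ is well posed for \emph{decreasing} $t$, so prescribing $\psi(\cdot,T)=0$ determines $\psi$ on $(T^*,T)$ once the Neumann flux $\omega$ is given, and the terminal requirement $\psi(\cdot,T^*)=p$ becomes a constraint that selects $\omega$. The role of $\omega$ is thus that of a boundary control steering the system from the state $0$ at time $T$ to the prescribed state $p$ at time $T^*$.

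To characterize the admissible $\omega$, I would pair the equation against a test function $z$ and integrate over $Q^+$. Integrating by parts in time and twice in space, and using $\psi(\cdot,T)=0$, $\psi(\cdot,T^*)=p$ and $\kappa\partial_n\psi=\omega$, every interior term cancels provided $z$ solves the adjoint problem $\partial_t z-\kappa\Delta z=0$ in $Q^+$ with homogeneous Neumann condition $\partial_n z=0$ on $\Sigma^+$ and free initial datum $z(\cdot,T^*)=z_0$. What survives is the HUM identity
\[
\int_{\Sigma^+}\omega\,z\,d\sigma\,dt=\int_\Omega p\,z_0\,dx\qquad\text{for every admissible }z_0 .
\]
Constructing $\omega$ therefore amounts to realizing the linear form $z_0\mapsto\int_\Omega p\,z_0$ as the boundary pairing of some $\omega$ with the Neumann trace $z|_{\Sigma^+}$. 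Following HUM I would introduce the quadratic functional
\[
J(z_0)=\tfrac12\int_{\Sigma^+}|z|^2\,d\sigma\,dt-\int_\Omega p\,z_0\,dx ,
\]
minimized over the completion of smooth data in the observation seminorm $z_0\mapsto\|z\|_{L^2(\Sigma^+)}$. Its unique minimizer $\hat z_0$ satisfies, by the Euler--Lagrange equation, exactly the displayed identity with $\omega=\hat z|_{\Sigma^+}$, so the $\psi$ associated with this $\omega$ meets $\psi(\cdot,T^*)=p$ precisely.

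The decisive step, and the one I expect to be the main obstacle, is the coercivity of $J$, i.e.\ the boundary observability inequality stating that the form $z_0\mapsto\int_\Omega p\,z_0$ is continuous with respect to $\|z\|_{L^2(\Sigma^+)}$, uniformly in $z_0$. This is delicate because the parabolic smoothing of the adjoint flow $z_0\mapsto z|_{\Sigma^+}$ makes the trace seminorm weak on high frequencies, so the estimate must be proved in a norm strong enough to admit \emph{every} target $p\in H^1(\Omega)$. I would establish it through a global Carleman estimate of Fursikov--Imanuvilov type (equivalently, the Lebeau--Robbiano spectral method) adapted to Neumann observation, selecting the Carleman weight so that the resulting observation norm controls the duality pairing $\langle p,\cdot\rangle$ for $p\in H^1(\Omega)$. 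The direct method then yields the minimizer and hence $\omega\in L^2(\Sigma^+)$ together with $\psi\in L^2((T^*,T);H^1(\Omega))$ from the energy estimate for the backward-in-$t$ well-posed problem driven by this datum.

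Finally I would upgrade the control to $\omega\in L^\infty(\Sigma^+)$. Here I would exploit that the HUM control inherits the analyticity in time of the adjoint heat flow for $t>T^*$, so that, combined with parabolic maximal regularity and the trace theorem, $\hat z|_{\Sigma^+}$ is uniformly bounded away from the initial time; alternatively, a flatness/weighted-Carleman construction produces a control that is bounded (indeed smooth in $t$) on $\Sigma^+$. The crux throughout remains the observability inequality in the $H^1$-admissible norm: once it is secured, existence of $(\psi,\omega)$ and the stated function-space memberships follow by the standard HUM machinery.
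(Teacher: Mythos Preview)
Your route differs substantially from the paper's. Rather than attacking the two endpoint constraints simultaneously by HUM, the paper superposes $\psi=\psi_1+\psi_2$: the piece $\psi_2$ solves the homogeneous, uncontrolled equation with $\psi_2(\cdot,T^*)=p$, and $\psi_1$ is then asked to solve a \emph{null} controllability problem (drive $\psi_1(\cdot,T)=-\psi_2(\cdot,T)$ to $\psi_1(\cdot,T^*)=0$), for which a theorem from Fursikov's monograph is quoted outright. The whole point of the split is to replace the exact-reachability question by a null-controllability one, where the classical results apply off the shelf and no new observability estimate has to be produced.

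Your direct HUM argument, by contrast, hinges on an inequality that does not hold in the stated generality. After the time reversal you describe, the task is \emph{exact} reachability for the forward heat equation: start from zero and hit an arbitrary $p\in H^1(\Omega)$ at time $T-T^*$ with an $L^2$ (later $L^\infty$) Neumann control. Parabolic smoothing forces the reachable set from zero to be a thin, analytic-type class, and in the dual picture this is precisely the failure of the coercivity you single out as ``the main obstacle'': the map $z_0\mapsto z|_{\Sigma^+}$ for the forward heat flow is compact from $L^2(\Omega)$ to $L^2(\Sigma^+)$, so no estimate $|\langle p,z_0\rangle|\le C_p\,\|z\|_{L^2(\Sigma^+)}$ can hold uniformly for every $p\in H^1(\Omega)$. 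Carleman estimates of Fursikov--Imanuvilov type and the Lebeau--Robbiano spectral method deliver only the observability dual to \emph{null} controllability---a bound on the adjoint state at the \emph{final} time $T$ by its boundary trace---not a bound on the initial datum $z_0$ at time $T^*$, and no choice of Carleman weight converts one into the other. The same obstruction blocks the proposed upgrade of $\omega$ to $L^\infty(\Sigma^+)$. The paper's superposition is exactly the device that sidesteps this: it trades the (generically impossible) exact-controllability step for a null-controllability step where those tools are legitimate.
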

\begin{proof}
    According to the Theorem 6.1 of Chapter 2 in  \cite{Fursikov2000}, the following equations has a solution $\omega$:
    $$
    \left\{\begin{array}{llll}
    \partial_t\psi_1 + \kappa \Delta\psi_1 = 0 & \text{in} \,Q^+,\\  
    \kappa\frac{\partial \psi_1}{\partial n} = \omega & \text{on}  \,\Sigma^+, \\
    \psi_1(x,T) = -\psi_2(x,T) & \text{in}  \,\Omega, \\
    \psi_1(x,T^*) = 0 & \text{in}  \,\Omega,
    \end{array}\right.
    $$
    where $\psi_2$ is the solution to
    $$
    \left\{\begin{array}{llll}
    \partial_t\psi_2 + \kappa \Delta\psi_2 = 0 & \text{in} \,Q^+,\\
    \kappa\frac{\partial \psi_2}{\partial n} = 0 & \text{on}  \,\Sigma^+, \\
    \psi_2(x,T^*) = p & \text{in}  \,\Omega.
    \end{array}\right.
    $$
    Then $\psi=\psi_1+\psi_2$ and $\omega$ are the solution to \eqref{sys_psi_ini}.
\end{proof}

Using the boundary control theory, the computability of $\mathcal{R}(v)$ is given in \cite{AndrleElBadia2015}. We restate the lemma in our setting and provide a brief proof.

\begin{lem}\label{lemma:control}
    For a test function $v\in\mathcal{V}$, take $p=v(x,T^*)$ in \eqref{sys_psi_ini} and $\omega$ is the corresponding control, and we have:
    \begin{equation}\label{RvCompt}
    \mathcal{R}(v) = 
    \kappa\int_{\Sigma^-}\varphi\frac{\partial v}{\partial n}\,d\sigma
    + 
    \int_{\Sigma^+}\varphi\omega\,d\sigma. 
    \end{equation}
\end{lem}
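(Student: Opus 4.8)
The plan is to reduce the claim to showing that the domain integral $\int_\Omega u(x,T^*)v(x,T^*)\,dx$ in the definition \eqref{functionalR} of $\mathcal{R}(v)$ equals the boundary integral $\int_{\Sigma^+}\varphi\omega\,d\sigma$; once this is established, \eqref{RvCompt} follows by adding $\kappa\int_{\Sigma^-}\varphi\frac{\partial v}{\partial n}\,d\sigma$ to both sides. The structural fact that makes this work is that, by hypothesis \ref{H2}, the source $F$ vanishes identically on $Q^+$, so on the cylinder $Q^+$ the state $u$ from \eqref{directmodel} satisfies the homogeneous heat equation $\partial_t u-\kappa\Delta u=0$ together with $\kappa\frac{\partial u}{\partial n}=0$ and $u=\varphi$ on $\Sigma^+$. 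On the other hand, with $p=v(x,T^*)\in H^1(\Omega)$ the pair $(\psi,\omega)$ furnished by Lemma \ref{lemmabdrycontrol} solves the homogeneous backward equation $\partial_t\psi+\kappa\Delta\psi=0$ on $Q^+$, with $\kappa\frac{\partial\psi}{\partial n}=\omega$ on $\Sigma^+$, $\psi(\cdot,T)=0$, and $\psi(\cdot,T^*)=v(\cdot,T^*)$.

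Next I would pair these two equations over $Q^+$. Since $(\partial_t u-\kappa\Delta u)\psi=0$ and $u(\partial_t\psi+\kappa\Delta\psi)=0$ pointwise on $Q^+$, integrating their sum yields
\[
\int_{Q^+}\partial_t(u\psi)\,dx\,dt+\kappa\int_{Q^+}\bigl(u\Delta\psi-\psi\Delta u\bigr)\,dx\,dt=0.
\]
For the first term, the fundamental theorem of calculus in $t$ and the terminal condition $\psi(\cdot,T)=0$ give $\int_{Q^+}\partial_t(u\psi)\,dx\,dt=-\int_\Omega u(x,T^*)v(x,T^*)\,dx$. For the second term, Green's second identity in $x$ turns the volume integral into $\kappa\int_{T^*}^{T}\!\int_{\partial\Omega}\bigl(u\frac{\partial\psi}{\partial n}-\psi\frac{\partial u}{\partial n}\bigr)\,d\sigma\,dt$; substituting $\kappa\frac{\partial\psi}{\partial n}=\omega$, $\kappa\frac{\partial u}{\partial n}=0$, and $u=\varphi$ on $\Sigma^+$ collapses this to $\int_{\Sigma^+}\varphi\omega\,d\sigma$. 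Combining the two displays gives exactly $\int_\Omega u(x,T^*)v(x,T^*)\,dx=\int_{\Sigma^+}\varphi\omega\,d\sigma$, which is what was needed; plugging into \eqref{functionalR} finishes the proof.

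The main obstacle is not the algebra but the regularity needed to justify these integrations by parts: since $\varphi\in L^2(\Sigma)$ the slice $u(\cdot,T^*)$ need not lie in $H^1(\Omega)$, and Lemma \ref{lemmabdrycontrol} only guarantees $\psi\in L^2((T^*,T);H^1(\Omega))$ and $\omega\in L^\infty(\Sigma^+)$, so the Green identities above must be read in the transposition (weak) sense, with $\int_\Omega u(x,T^*)v(x,T^*)\,dx$ interpreted as a duality pairing. I would handle this in the standard way: first prove the identity for smooth source data, where every integral is classical, and then pass to the limit using the $L^2$-continuity of the direct solution map $F\mapsto(u,\varphi)$ (the control $\omega$ being independent of $F$, since it depends only on $p=v(\cdot,T^*)$). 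Alternatively, one may simply invoke the corresponding computation already carried out in \cite{AndrleElBadia2015}, which is why only a brief proof is needed here; the remaining step — substituting back into \eqref{functionalR} — is immediate.
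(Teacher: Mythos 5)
Your proposal is correct and follows essentially the same route as the paper: multiply the backward equation for $\psi$ by $u$ (equivalently, pair the two homogeneous equations on $Q^+$), integrate by parts in $t$ and $x$, and use $\psi(\cdot,T)=0$, $\psi(\cdot,T^*)=v(\cdot,T^*)$, $\kappa\frac{\partial u}{\partial n}=0$ and $\kappa\frac{\partial \psi}{\partial n}=\omega$ to identify $\int_\Omega u(x,T^*)v(x,T^*)\,dx$ with $\int_{\Sigma^+}\varphi\omega\,d\sigma$. Your additional remarks on the regularity needed to justify the Green identities go beyond the paper's brief proof but do not change the argument.
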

\begin{proof}
     Multiplying the equation in the system \eqref{sys_psi_ini} with the solution $u$, and integrating by parts in $Q^+$ leads to
    \begin{align*}
        0 =& \int_{Q^+}\big(\partial_t\psi + \kappa \Delta\psi\big)u\,dxdt\\
        =&\int_{Q^+}\Big[\partial_t(\psi u) - \psi\partial u\Big]\,dxdt
        +\int_{\Sigma^+}\kappa\left(\frac{\partial \psi}{\partial n} u - \psi\frac{\partial u}{\partial n}\right)\,d\sigma
        +\int_{Q^+} \kappa\psi\Delta u \,dxdt \\
        =&\left.\int_{\Omega}u\psi\,dx\right|_{T^*}^T + \int_{\Sigma^+}\kappa\frac{\partial \psi}{\partial n} u\,d\sigma \\ 
        =& -\int_{\Omega} u(x,T^*) v(x,T^*)\,dx + \int_{\Sigma^+}\omega \varphi\,d\sigma.
    \end{align*}
    Recalling the definition of $\mathcal{R}(v)$ from \eqref{functionalR}, the proof is completed.
\end{proof}
\begin{remark}
    The boundary control $\omega$ that satisfies \eqref{sys_psi_ini} is not necessarily unique. However, the value of $\mathcal{R}(v)$ remains invariant to the specific choice of $\omega$. This invariance can be attributed to the fact that, given two boundary controls $\omega_1$ and $\omega_2$ associated with the test function $v$, the relation 
    $$
    \int_{\Sigma^+}\varphi(\omega_1-\omega_2)\,d\sigma=0
    $$ 
    holds true.
\end{remark}

\subsection{Choice of test functions}\label{subsection:fourier}
There exist various types of test functions that satisfy the adjoint problem, including harmonic polynomial functions of the form: 
$$
v(x)=(x\cdot a)^n \qquad \text{ for } x=(x_1,\cdots , x_d)\in\mathbb{R}^d, a=(a_1,\cdots, a_d) \in \mathbb{C}^d, a\cdot a=0.
$$ 
In \cite{AndrleElBadia2015}, it was investigated that harmonic polynomial functions are utilized to identify the information of sources. However, their algebraic method is prone to instability due to the need for calculating matrix eigenvalues. Another drawback is that as the power $n$ increases, the function becomes numerically unstable. This instability directly affects the accuracy of the numerical computation of $R(v)$, thereby potentially disrupting the entire algorithm. Additionally, harmonic polynomials are independent of time. Consequently, from $R(v)$, we can only gather information regarding the total intensity $\int_{0}^{T*} g_j(t) dt$. To recover the intensity functions, an alternative method must be employed, such as solving the minimization problem described in \cite{AndrleElBadia2015}. This method will involve solving heat equations for each iterative step, thus leads to large computational time. 

To enhance the numerical method, we incorporate the dynamic complex geometrical solutions (CGO) solutions \( v(x,t) = e^{\alpha t + \rho \cdot x} \), where \( \alpha \in \mathbb{C} \) and \( \rho \in \mathbb{C}^d \). First, choosing different dynamic CGO solutions lead to the stability of the numerical method as affirmed by Theorem~\ref{thm: stab}. Moreover, the time component of dynamic CGO solutions aids in recovering the Fourier coefficients of intensity functions, as indicated by the subsequent Theorem~\ref{thm:fourier}.

Define the function space \( \mathcal{H} \subset \mathcal{V} \) and the subspaces \( \mathcal{H}_k \) of \( \mathcal{H} \) by
\begin{align*}
        \mathcal{H} = \big\{ v \in L^2((0,T);H^1(\Omega))\big|& v(x,t) = e^{\alpha t+\rho \cdot x} 
        \text{ with } \alpha \in \mathbb{C}, \rho \in \mathbb{C}^d\\
        &\text{ satisfies } \alpha + \kappa\rho\cdot\rho = 0.\big\},
\end{align*}
\begin{align*}
        \mathcal{H}_k = \big\{ v \in L^2((0,T);H^1(\Omega))\big|& v(x,t) = e^{-\frac{2k\pi {\rm{i}}}{T^*} t+\rho \cdot x} 
        \text{ with } \rho \in \mathbb{C}^d\\
        &\text{ satisfies } -\frac{2k\pi {\rm{i}}}{T^*} + \kappa\rho\cdot\rho = 0.\big\}.
\end{align*}
In the context of our problem, it's crucial to consider the frequency-dependent behavior of the system. By introducing subspaces \( \mathcal{H}_k \), we aim to capture the essence of this frequency-dependent behavior. Each subspace \( \mathcal{H}_k \) focuses on a specific frequency \( k \), allowing us to isolate and analyze the system's response at that particular frequency. This decomposition into subspaces is not only mathematically elegant but also computationally efficient. The forthcoming algorithm, which solves the subproblem for each frequency \( k \), can then leverage the information contained within the corresponding subspace \( \mathcal{H}_k \), ensuring a more targeted and precise numerical solution.

For the integer $k$, let $\alpha_k = -\frac{2k\pi {\rm{i}}}{T^*}$, and define the corresponding Fourier coefficients of $g_j$ by 
\begin{equation}\label{def_lambda}
\displaystyle \lambda_j(\alpha_k) = \int_0^{T^*} g_j(t)e^{\alpha_k t}\,dt.
\end{equation}

\begin{thm}\label{thm:fourier}
Given $\big(m,(s_j)_{1\leq j \leq m}\big)$, the $k$-th Fourier coefficients $\lambda_j(\alpha_k)$ can be calculated from $\{\mathcal{R}(v)|v\in \mathcal{H}_k\}$.
\end{thm}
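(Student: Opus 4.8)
The plan is to exploit the fact that, for $v\in\mathcal H_k$, the variational identity \eqref{internal} reads
\begin{equation*}
\mathcal R(v)=\sum_{j=1}^m\Big(\int_0^{T^*}g_j(t)e^{\alpha_k t}\,dt\Big)e^{\rho\cdot s_j}=\sum_{j=1}^m\lambda_j(\alpha_k)\,e^{\rho\cdot s_j},
\end{equation*}
because the time factor of $v$ is exactly $e^{\alpha_k t}$, which decouples cleanly from the spatial factor $e^{\rho\cdot s_j}$ and produces the Fourier coefficient $\lambda_j(\alpha_k)$ from \eqref{def_lambda}. So once the positions $s_j$ are known, evaluating $\mathcal R$ at enough elements of $\mathcal H_k$ gives a linear system in the unknowns $\lambda_1(\alpha_k),\dots,\lambda_m(\alpha_k)$, and it remains to show this system is solvable, i.e. that we can choose $\rho$'s making the coefficient matrix $\big(e^{\rho^{(\ell)}\cdot s_j}\big)_{\ell,j}$ invertible.

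First I would parametrize the admissible $\rho\in\mathbb C^2$ satisfying $\alpha_k+\kappa\,\rho\cdot\rho=0$. In $\mathbb R^2$ one writes $\rho=\eta\,\omega$ with $\omega=(1,{\rm i})$ (so $\omega\cdot\omega=0$) plus a component along a real unit vector; more concretely, for a real direction $\xi\in\mathbb R^2$, $|\xi|=1$, and its rotation $\xi^\perp$, set $\rho=\tau\xi+{\rm i}\sqrt{\tau^2+\alpha_k/\kappa}\;\xi^\perp$ (choosing a branch of the square root), which satisfies $\rho\cdot\rho=\tau^2-(\tau^2+\alpha_k/\kappa)=-\alpha_k/\kappa$. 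Letting $\tau\to\infty$ along a fixed direction $\xi$, the real part of $\rho\cdot s_j$ behaves like $\tau\,(\xi\cdot s_j)$, so $|e^{\rho\cdot s_j}|$ grows at rate determined by $\xi\cdot s_j$; since the $s_j$ are distinct (assumption \ref{H1}), a generic $\xi$ separates their values $\xi\cdot s_1,\dots,\xi\cdot s_m$.

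The key step is then a Vandermonde-type nondegeneracy argument: picking one direction $\xi$ with the $\xi\cdot s_j$ pairwise distinct and varying $\tau$ over $m$ values $\tau_1,\dots,\tau_m$, the matrix $M_{\ell j}=e^{\rho(\tau_\ell)\cdot s_j}$ has entries that, to leading order in large $\tau_\ell$, look like $e^{\tau_\ell(\xi\cdot s_j)}$ up to lower-order oscillatory corrections; a determinant expansion (or an analyticity argument in $\tau$, since each $e^{\rho(\tau)\cdot s_j}$ is an entire function of $\tau$ once a square-root branch is fixed away from its cut, and the generalized Vandermonde determinant $\det(e^{\tau_\ell b_j})$ with distinct $b_j$ is not identically zero) shows $\det M\neq0$ for suitable choices. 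Hence the linear map $\big(\lambda_j(\alpha_k)\big)_{j}\mapsto\big(\mathcal R(v)\big)_{v\in\mathcal H_k}$ is injective and invertible on a finite subset, so the $\lambda_j(\alpha_k)$ are determined by, and computable from, $\{\mathcal R(v)\mid v\in\mathcal H_k\}$.

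The main obstacle I anticipate is the nondegeneracy of the exponential (generalized Vandermonde) matrix: one must rule out accidental cancellations coming from the imaginary part of $\rho\cdot s_j$, which contributes unit-modulus oscillatory factors $e^{{\rm i}\,\mathrm{Im}(\rho)\cdot s_j}$ that are not simply ordered by magnitude. The cleanest way around this is the analyticity argument — fix the direction $\xi$ so that $\xi\cdot s_j$ are distinct, regard $\tau\mapsto\det\big(e^{\rho(\tau_\ell)\cdot s_j}\big)$ (or better, fix all but one node and view it as a function of a single complex parameter) as a nonzero real-analytic/entire function whose zero set is discrete, and conclude that all but a measure-zero set of node choices yield an invertible matrix. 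A degenerate case to check separately is when two distinct points $s_i\neq s_j$ happen to satisfy $\xi\cdot s_i=\xi\cdot s_j$ for the chosen $\xi$; this is avoided simply by perturbing $\xi$, since the bad directions form a finite union of lines in $\mathbb R^2$.
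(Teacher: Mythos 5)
Your proposal is correct in outline but proves the key invertibility step by a genuinely different route from the paper. The paper reduces the statement, exactly as you do, to the invertibility of the matrix $A_k=\bigl(e^{\rho_l\cdot s_j}\bigr)_{l,j}$ coming from \eqref{weakformavg}, but then it makes an \emph{explicit} choice $\rho_l=r_k s_l+\mathrm{i}b_l$ (real part aligned with the $l$-th source, via Lemma~\ref{lemma:vectorb}) and proves invertibility by a weighted Gershgorin argument (Lemma~\ref{lemma:circle} with weights $p_j=e^{\frac{r_k}{2}|s_j|^2}$), which turns the off-diagonal terms into $e^{-r_k|s_j-s_l|^2/2}$ and yields the quantitative eigenvalue bound $|\mu|\geq 1-(m-1)e^{-r\delta^2/2}$ of Theorem~\ref{thm: eigenvalue}. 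You instead take a one-parameter family $\rho(\tau)=\tau\xi+\mathrm{i}\sqrt{\tau^2+\alpha_k/\kappa}\,\xi^\perp$ along a single generic direction $\xi$ and argue nondegeneracy of the resulting perturbed exponential Vandermonde matrix by asymptotics in $\tau$ plus analyticity/genericity. Your parametrization does satisfy $\alpha_k+\kappa\rho\cdot\rho=0$, your identification of the oscillatory factors as the main obstruction is accurate, and the argument can be completed (e.g.\ take $\tau_\ell=\ell\tau$ and observe that in the Leibniz expansion the sorting permutation uniquely maximizes $\sum_\ell\ell\,(\xi\cdot s_{\sigma(\ell)})$ by the rearrangement inequality, so one term dominates for large $\tau$; this also supplies the seed point needed for the analyticity argument). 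The trade-off is that your argument is qualitative: it shows invertibility for generic nodes but gives no lower bound on the smallest singular value, whereas the paper's diagonal-dominance bound is precisely what feeds the stability estimate of Theorem~\ref{thm: stab} and dictates the numerically usable choice of test functions; moreover, when the projections $\xi\cdot s_j$ nearly coincide your construction forces very large $\tau$ and hence a badly conditioned matrix, a situation the paper's source-aligned choice avoids by construction. If you adopt your route, you should also spell out the nondegeneracy step rather than leaving it as ``a determinant expansion or an analyticity argument,'' since that is the entire content of the theorem once the linear system \eqref{linear_lambda} is written down.
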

Since the proof involves Theorem \ref{thm: eigenvalue}, we postpone it in Section \ref{subsection:matrix}.

Noticing that the intensity functions $g_j(t) \in L^2(0,T^*)$, the Fourier transform tells us:
    \begin{equation}\label{fouriers}
        g_j(t) \approx \frac{1}{T^*}\sum_{k}\lambda_j(\alpha_k)e^{-\alpha_kt} \qquad \text{ for } j = 1, \ldots, m. 
    \end{equation}
\section{The well-posedness and stability of the inverse problem}\label{section:uniqueness}
In this section, we address the uniqueness of the solution to the inverse problem based on the new data type $\{\mathcal{R}(v)|v\in \mathcal{H}\}$. Additionally, we discuss the feasibility of our proposed numerical method. The analyses of these two main results rely on some certain constructive dynamic CGO solutions. Let $\rho =a+b\mathrm{i}$ with $a, b \in \mathbb{R}^d$. 
We will construct several dynamic CGO solutions, $v(x,t)= e^{\alpha t+(a+b{\rm{i}})\cdot x}$, based on the real vector $a$. This construction is detailed in the following lemma:
\begin{lem}\label{lemma:vectorb}
    Fix $k\in \mathbb{R}$ and for real vector $a\in \mathbb{R}^d$ with $|a| \geq \sqrt{|k\pi|}$, there exists a real vector $b\in \mathbb{R}^d$ such that the complex vector $\rho =a+b\mathrm{i} \in \mathbb{C}^d$ satisfies $\rho \cdot \rho =2k\pi \mathrm{i}$.
\end{lem}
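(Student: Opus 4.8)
The plan is to split the unknown real vector $b$ into its component parallel to $a$ and its orthogonal complement, which turns the single complex vector equation $\rho\cdot\rho = 2k\pi\mathrm{i}$ into two scalar conditions that can be solved separately. First I would expand
\[
\rho\cdot\rho = (a+b\mathrm{i})\cdot(a+b\mathrm{i}) = \bigl(|a|^2 - |b|^2\bigr) + 2\mathrm{i}\,(a\cdot b),
\]
so that the requirement $\rho\cdot\rho = 2k\pi\mathrm{i}$ is equivalent to the pair of real equations $|b|^2 = |a|^2$ and $a\cdot b = k\pi$. Thus the lemma amounts to the geometric statement that a complex vector squares to a purely imaginary number exactly when its real and imaginary parts are orthogonal and of equal length, and one must additionally realize the prescribed value $k\pi$ of (half) the cross term.

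Next, assuming $a\neq 0$ — which is automatic when $k\neq 0$ since $|a|\geq\sqrt{|k\pi|}$, while the case $a=0$ forces $k=0$ and then $b=0$ works — I would seek $b$ of the form $b = \tfrac{k\pi}{|a|^2}\,a + c$ with $c\perp a$. The condition $a\cdot b = k\pi$ then holds identically, and $|b|^2 = \tfrac{(k\pi)^2}{|a|^2} + |c|^2$, so the remaining condition becomes
\[
|c|^2 = |a|^2 - \frac{(k\pi)^2}{|a|^2} = \frac{|a|^4 - (k\pi)^2}{|a|^2}.
\]
The hypothesis $|a|\geq\sqrt{|k\pi|}$ is precisely what makes the right-hand side nonnegative. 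Hence a vector $c$ orthogonal to $a$ with this prescribed length exists: if the length is zero take $c=0$; otherwise, since $d\geq 2$ the orthogonal complement of the nonzero vector $a$ is nontrivial, so one picks any nonzero vector there and rescales it to the required norm. Setting $\rho = a + b\mathrm{i}$ with this $b$ completes the construction.

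There is no genuine analytic obstacle here; the only points needing a word of care are the degenerate and borderline cases. When $|a| = \sqrt{|k\pi|}$ the orthogonal part vanishes and $b = \tfrac{k\pi}{|a|^2}\,a$ is the (essentially forced) choice, valid in any dimension; when $k=0$ the recipe reduces to choosing any $b\perp a$ with $|b|=|a|$, and in dimension $d=1$ this is possible only in the trivial case $a=0$, which is why the dimension hypothesis $d\ge 2$ (satisfied in our $\mathbb{R}^2$ setting) is used. I would finish by noting that $b$ is far from unique — one may replace $c$ by any vector of the same length in the $(d-1)$-dimensional sphere of $a^\perp$ — which is exactly the freedom exploited later when several distinct CGO solutions at the same frequency $k$ are needed.
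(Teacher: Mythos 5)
Your proof is correct and amounts to essentially the same construction as the paper's: both reduce the equation $\rho\cdot\rho=2k\pi\mathrm{i}$ to the pair $|b|=|a|$, $a\cdot b=k\pi$ and then produce such a $b$ explicitly, the paper by rotating $a$ through the angle $\arccos\!\left(k\pi/|a|^2\right)$ in a fixed $2$-plane and you by an equivalent parallel/orthogonal decomposition $b=\tfrac{k\pi}{|a|^2}a+c$. Your version is, if anything, slightly more careful about the borderline case $|a|=\sqrt{|k\pi|}$ and the implicit use of $d\ge 2$, but there is no substantive difference in approach.
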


\begin{proof}
    Suppose $a=|a|(\cos \beta a_1,\sin \beta)$, where $a_1\in \mathbb{R}^{d-1}$ with $|a_1|=1$. If $|a| \neq 0$, we simply take $b=|a|(\cos \gamma a_1,\sin \gamma)$, where $\gamma=\beta+\arccos \left(\frac{k\pi}{|a|^2}\right)$.
    If $|a|=0$, i.e., $k=0$, then we take $b=\boldsymbol{0}$.
    
\end{proof}
\subsection{Uniqueness theorem}
The uniqueness theorem, which maps the measured boundary data $\varphi$ to the source set  $\theta$ under assumptions \ref{H1} and \ref{H2}, is established in \cite[Theorem 1]{AndrleElBadia2015}.
However, our numerical approach transforms the boundary data $\varphi$ into a different data type, $\{\mathcal{R}(v)|v\in \mathcal{H}\}$. Given that we employ a smaller test function space $\mathcal{H}$, it becomes essential to establish the uniqueness of the mapping from $\{\mathcal{R}(v)|v\in \mathcal{H}\}$ to the source term $\theta$. The following theorem addresses this:

\begin{thm}
   The operator $\mathcal{P}_{\mathcal{H}}$ defined by \eqref{operator} is an injection.
\end{thm}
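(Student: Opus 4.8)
The plan is to show that if two source configurations $\theta = \{m, (s_j, g_j)\}$ and $\tilde\theta = \{\tilde m, (\tilde s_l, \tilde g_l)\}$ produce the same data, i.e. $\mathcal{R}(v) = \tilde{\mathcal{R}}(v)$ for all $v \in \mathcal{H}$, then $\theta = \tilde\theta$. By Lemma~\ref{lemma:Rv} and the weak formulation \eqref{internal}, the hypothesis is equivalent to
\begin{equation*}
\sum_{j=1}^m \int_0^{T^*} g_j(t) v(s_j,t)\,dt = \sum_{l=1}^{\tilde m} \int_0^{T^*} \tilde g_l(t) v(\tilde s_l, t)\,dt \qquad \forall v \in \mathcal{H}.
\end{equation*}
I would first reduce to a fixed Fourier mode: restricting to $v \in \mathcal{H}_k$, i.e. $v(x,t) = e^{\alpha_k t + \rho\cdot x}$ with $\rho\cdot\rho = 2k\pi\mathrm{i}/\kappa$ (up to the sign convention in the paper), the time integral collapses via \eqref{def_lambda} to give
\begin{equation*}
\sum_{j=1}^m \lambda_j(\alpha_k)\, e^{\rho \cdot s_j} = \sum_{l=1}^{\tilde m} \tilde\lambda_l(\alpha_k)\, e^{\rho \cdot \tilde s_l}
\end{equation*}
for every admissible $\rho$.

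The core of the argument is then a separation-of-exponentials / analyticity step: because this identity holds for all $\rho$ in the quadric $\{\rho\cdot\rho = c_k\}$, which by Lemma~\ref{lemma:vectorb} contains $a + b\mathrm{i}$ for every real $a$ with $|a|$ large enough (and $b$ depending on $a$), the functions $\rho \mapsto e^{\rho\cdot s}$ for distinct $s$ are linearly independent on this set. I would make this precise by viewing $a \mapsto e^{(a+b(a)\mathrm{i})\cdot s}$, letting $|a| \to \infty$ along suitable rays so that the real part $a\cdot s$ dominates and the term with the largest $\mathrm{Re}(\rho\cdot s)$ cannot be cancelled unless its coefficient vanishes — a standard argument for showing point sources are determined. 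This forces: the supports $\{s_j : \lambda_j(\alpha_k)\neq 0\}$ and $\{\tilde s_l : \tilde\lambda_l(\alpha_k)\neq 0\}$ coincide, and the corresponding coefficients $\lambda_j(\alpha_k)$ match, for each $k$. (Alternatively, one can invoke Theorem~\ref{thm:fourier} directly: it already asserts that, given $(m,(s_j))$, the $\lambda_j(\alpha_k)$ are determined by $\{\mathcal{R}(v): v\in\mathcal{H}_k\}$; so the remaining content is really the determination of $m$ and the $s_j$.)

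Having recovered $\lambda_j(\alpha_k)$ and $\tilde\lambda_l(\alpha_k)$ for all $k \in \mathbb{Z}$, I would pass back to the time domain: by \eqref{fouriers} (completeness of the exponential system $\{e^{\alpha_k t}\}$ on $L^2(0,T^*)$), the full sequence of Fourier coefficients determines $g_j \in L^2(0,T^*)$, and since $g_j = 0$ on $(T^*,T)$ by \ref{H2}, it determines $g_j$ on $(0,T)$. Because each $g_j$ is nonzero by \ref{H2}, there is at least one $k$ with $\lambda_j(\alpha_k) \neq 0$, so every true source location appears in the recovered support for that $k$; collecting over all $k$ recovers the full set $\{s_j\}$, hence $m = \tilde m$ and (after matching indices via the distinct locations, using \ref{H1}) $s_j = \tilde s_j$ and $g_j = \tilde g_j$. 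Thus $\theta = \tilde\theta$ and $\mathcal{P}_{\mathcal{H}}$ is injective.

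The main obstacle is the separation-of-exponentials step: one must carefully choose the family of vectors $a$ (and the associated $b = b(a)$ from Lemma~\ref{lemma:vectorb}) so that the exponents $\rho\cdot s_j$ have distinct, well-separated real parts as $|a|\to\infty$, handling the fact that $b$ is not free but constrained by the quadric relation, and ensuring the constraint $|a|\ge\sqrt{|k\pi|}$ is harmless since we only need large $|a|$. A secondary technical point is justifying the termwise limit (dominated convergence or a direct estimate on the finite sum) and confirming that the exponential system $\{e^{-2k\pi\mathrm{i}t/T^*}\}_{k\in\mathbb{Z}}$ is complete in $L^2(0,T^*)$ so that no information about $g_j$ is lost — this is the classical Fourier basis, so it is immediate.
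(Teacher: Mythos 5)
Your proposal is correct and follows essentially the same route as the paper: both arguments restrict to the modes $\mathcal{H}_k$, take CGO test functions $e^{\alpha_k t+\xi(a+b\mathrm{i})\cdot x}$ with $|a|\to\infty$ so that the exponential attached to one source dominates and forces its Fourier coefficient $\lambda_j(\alpha_k)$ to vanish, and then use completeness of $\{e^{\alpha_k t}\}_{k\in\mathbb{Z}}$ in $L^2(0,T^*)$ together with \ref{H1}--\ref{H2} to conclude. The only cosmetic difference is that the paper argues on the kernel (ordering the sources so that $|s_1|\geq|s_j|$ and taking $a=\xi s_1$, which makes $s_1\cdot s_1>s_1\cdot s_j$ and settles your ``suitable rays'' point explicitly), whereas you compare two configurations directly.
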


\begin{proof}\
We will show that there exists only a trivial solution $\theta$ such that
    \begin{equation}\label{eqn_zero}
        \mathcal{R}(v)=0 \qquad \forall v \in \mathcal{H}
    \end{equation}
by contradiction. 
For $m=1$, it is clear that $\theta$ must be a trivial solution and hence $\mathcal{P}_{\mathcal{H}}$ is an injection.
Assume that we have more than one nonzero sources.
    Without loss of generality, order the sources in terms of locations such that 
    $$
    |s_1|\geq |s_j|\qquad \forall 2\leq j \leq m.
    $$
Since all the locations are distinct, we have:
    \begin{equation}\label{src}
    s_1\cdot s_1 > s_{1}\cdot s_j  \qquad \forall 2\leq j \leq m. 
    \end{equation}
For each $k$, we employ Lemma \ref{lemma:vectorb} to construct the test functions
\[
v_{\xi}(x,t) = e^{-\alpha_k t}w_{\xi}(x) = e^{-\alpha_k t}e^{\xi(s_1 + b_{\xi}\mathrm{i})\cdot x} \qquad \forall \xi > \xi_0 := \frac{1}{|s_1|}\sqrt{\frac{|k|\pi}{T^*}}.
\]
Note that $|w_{\xi}(x)| = |w_{\xi_0}(x)|^{\frac{\xi}{\xi_0}}$, and the inequality \eqref{src} implies that 
\begin{equation}\label{leqeq}
|w_{\xi_0}(s_1)| > |w_{\xi_0}(s_j)| \qquad \forall 2 \leq j \leq m.
\end{equation}
Obviously $v_{\xi} \in \mathcal{H}$. 
Plugging the test function $v_{\xi}$ into \eqref{eqn_zero} and dividing both sides by $w_{\xi}(s_1)$ lead to 
    \begin{align*}
        0=&\sum_{j=1}^{m} \int_0^{T^*} g_j(t)e^{-\alpha_kt}\,dt \frac{w_{\xi}(s_j)}{w_{\xi}(s_1)}\\
        =& \int_0^{T^*} g_1(t)e^{-\alpha_kt}\,dt + \sum_{j=2}^{m}\int_0^{T^*} g_j(t)e^{-\alpha_kt}\,dt \frac{w_{\xi}(s_j)}{w_{\xi}(s_1)}.
    \end{align*}
It follows that
    \begin{align*}
        \left|\int_0^{T^*} g_1(t)e^{-\alpha_kt}\,dt\right| &= \left|-\sum_{j=2}^{m}\int_0^{T^*} g_j(t)e^{-\alpha_kt}\,dt\frac{w_{\xi}(s_j)}{w_{\xi}(s_1)}\right|\\
         &\leq \sum_{j=2}^{m}\left|\int_0^{T^*} g_j(t)e^{-\alpha_k t}\,dt\right| \left|\frac{w_{\xi_0}(s_j)}{w_{\xi_0}(s_1)}\right|^{\frac{\xi}{\xi_0}}.
    \end{align*}
Taking the limit as $\lambda \to +\infty$, together with \eqref{leqeq}, we conclude that
\[
\int_0^{T^*} g_1(t)e^{-\alpha_kt}\,dt = 0 \qquad \forall k \in \mathbb{R}. \quad
\Longrightarrow \quad g_1(t) \equiv 0 \qquad \forall t \in [0,T^*].
\]
It contradicts the assumption \ref{H2}, and the proof is complete.
\end{proof}

\begin{remark}
    Lemma \ref{lemma:control} shows that the measured data $\varphi$ uniquely determines the values of functional $\mathcal{R}(v)$. Consequently, the injectivity of $\mathcal{P}_{\mathcal{H}}$ supports \cite[Theorem 1]{AndrleElBadia2015}, which states that the source set $\theta$ is uniquely determined by the boundary measurement $\varphi_{\theta}$. 
\end{remark}

\subsection{The feasibility of the numerical method and stability results}\label{subsection:matrix}

 In the following discussion, we explore the reconstruction of the Fourier coefficients associated with the intensity functions. Building upon the forthcoming Section~\ref{subsection:minimization}, which introduces an efficient numerical method to ascertain both the number of sources and their respective locations, we assume the availability of the values $\{m, (s_j)_{j=1}^m\}$ within the set $\theta$. Our analysis will particularly emphasize the solvability of the terms $\lambda_j(\alpha_k)$.

Fix $k$, and suppose there exist $m$ distinct complex vectors $\{\rho_l^{(k)}\}_{1\leq l \leq m}$ associated to $\alpha_k$ according to Lemma \ref{lemma:vectorb} such that
\[
v_{k,l}(x,t) = e^{\alpha_k t + \rho_l^{(k)}\cdot x} \in \mathcal{H}.
\]
Then equations \eqref{internal} and \eqref{def_lambda} lead to 
\begin{equation}\label{weakformavg}
    \mathcal{R}(v_{k,l}) = \sum_{j=1}^m \lambda_j(\alpha_k) e^{\rho_l^{(k)}\cdot s_j}.
\end{equation}
We end up with a linear system of the Fourier coefficients for the $k$-th mode: 
\begin{equation}\label{linear_lambda}
A_k \Lambda_k = \mathcal{R}_k,
\end{equation}
where $A_k = \left[a^{(k)}_{l,j}\right]_{1\leq l, j \leq m}$ with $a^{(k)}_{l,j}=e^{\rho_l^{(k)}\cdot s_j}$, and the two vectors are
\[
\Lambda_{k}=\bigg(\lambda_{1}(\alpha_{k}), \cdots, \lambda_{m}(\alpha_{k})\bigg)^{T}, \quad \mathcal{R}_{k}=\bigg(\mathcal{R}(v_{k, 1}), \cdots, \mathcal{R}(v_{k, m})\bigg)^{T}.
\]
The solvability of $\Lambda_k$ is equivalent to the invertibility of the matrix $A_k$.
We will show that for properly chosen test functions, i.e., complex vectors $\{\rho_l^{(k)}\}_{1\leq l \leq m}$, all the eigenvalues of $A_k$ are nonzero, and hence $A_k$ is invertible.
We start with a lemma to estimate the magnitude of the eigenvalues of a matrix.
 
\begin{lem}\label{lemma:circle}
   Consider a general matrix $A=(a_{l,j})_{1\leq l, j \leq m}$ and its eigenvalue $\mu$. For any given positive real numbers  $p_1,\ldots, p_m$, we have 
    \begin{equation*}
    	\mu \in \bigcup_{j=1}^{m}\mathcal{B}\left(a_{j,j},p_j\sum_{l\neq j}\frac{|a_{l,j}|}{p_l}\right)
    \end{equation*}
	where the open ball $\mathcal{B}(z_0,r):=\{z\in \mathbb{C}:|z-z_0|<r\}$.
\end{lem}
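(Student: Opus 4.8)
The plan is to prove a weighted version of the Gershgorin circle theorem, which is a standard diagonal‑scaling argument. Let $\mu$ be an eigenvalue of $A=(a_{l,j})_{1\le l,j\le m}$ with eigenvector $\xi=(\xi_1,\dots,\xi_m)^{T}\neq 0$. The idea is to rescale the components of $\xi$ by the given positive weights $p_1,\dots,p_m$: set $\eta_l = \xi_l / p_l$ and pick the index $i$ at which $|\eta_i|$ is maximal, so that $|\eta_i|>0$ and $|\eta_l|\le |\eta_i|$ for all $l$. (Equivalently, one may say: let $i$ maximize $|\xi_i|/p_i$.)

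Next I would write out the $i$-th row of the eigenvalue equation $A\xi=\mu\xi$, namely $\sum_{l=1}^{m} a_{i,l}\xi_l = \mu\,\xi_i$, move the diagonal term to the right, and divide through by $\xi_i$ (nonzero since $|\eta_i|>0$ and $p_i>0$). This gives
\[
\mu - a_{i,i} \;=\; \sum_{l\neq i} a_{i,l}\,\frac{\xi_l}{\xi_i}.
\]
Taking absolute values and inserting the weights, $\bigl|\xi_l/\xi_i\bigr| = (p_l/p_i)\,\bigl|\eta_l/\eta_i\bigr| \le p_l/p_i$, yields
\[
|\mu - a_{i,i}| \;\le\; \sum_{l\neq i} |a_{i,l}|\,\frac{p_l}{p_i} \;=\; \frac{1}{p_i}\sum_{l\neq i}|a_{i,l}|\,p_l .
\]

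Wait — I should be careful about the row‑versus‑column convention, since the statement as written has the radius $p_j\sum_{l\neq j}|a_{l,j}|/p_l$, i.e. a \emph{column} sum of the entries $a_{l,j}$, weighted by $p_j/p_l$. Since $A$ and $A^{T}$ have the same eigenvalues, it suffices to apply the row‑based argument above to $A^{T}$: its $(i,l)$ entry is $a_{l,i}$, so the bound becomes $|\mu - a_{i,i}|\le \frac{1}{p_i}\sum_{l\neq i}|a_{l,i}|\,p_l = p_i\sum_{l\neq i}\frac{|a_{l,i}|}{p_l}\cdot\frac{1}{p_i^{2}}\cdot p_i^{2}$ — let me just state it cleanly: running the maximal‑component argument on $A^{T}$ with the \emph{same} weights $p_l$ gives $|\mu - a_{i,i}|\le \sum_{l\neq i}|a_{l,i}|\,p_l/p_i$. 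To match the claimed form exactly one instead chooses to maximize $|\xi_i|\,p_i$ (i.e. use weights $1/p_l$ in the scaling of $A^{T}$), which produces $|\mu-a_{i,i}|\le p_i\sum_{l\neq i}|a_{l,i}|/p_l$. Either way, $\mu$ lies in the ball $\mathcal{B}\!\left(a_{i,i},\,p_i\sum_{l\neq i}|a_{l,i}|/p_l\right)$ for this particular $i$, hence in the union over all $j$, which is the claim.

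There is no serious obstacle here; the only thing to get right is the bookkeeping of which index is chosen to be maximal and, correspondingly, whether one works with row sums of $A$ or of $A^{T}$, so that the final radius is the column‑weighted sum $p_j\sum_{l\neq j}|a_{l,j}|/p_l$ displayed in the statement. I would therefore present it as: apply the classical weighted Gershgorin estimate to $A^{T}$, noting $\operatorname{spec}(A)=\operatorname{spec}(A^{T})$, with the weight vector chosen so that the radii come out in the stated column form.
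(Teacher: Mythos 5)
Your proof is correct and is essentially the argument the paper uses: the paper conjugates $A$ by $D=\mathrm{diag}(p_1,\dots,p_m)$ and cites the Gershgorin circle theorem for $B^{T}=(D^{-1}AD)^{T}$, while you inline the standard maximal-component proof of Gershgorin with the weights absorbed into the eigenvector scaling $\eta_l=\xi_l/p_l$ --- these are the same computation, since the similarity transform acts on eigenvectors exactly by that rescaling. Your care with the row-versus-column convention (working with $A^{T}$ and weights $1/p_l$ so the radius comes out as the column sum $p_j\sum_{l\neq j}|a_{l,j}|/p_l$) matches the paper's choice to apply the theorem to $B^{T}$.
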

\begin{proof}
    Introduce a matrix
    \begin{equation}\label{matrixs}
        B=D^{-1}AD = \left[b_{l,j}\right]_{1\leq l, j \leq m} = \left[\frac{p_j}{p_l}a_{l,j}\right]_{1\leq l, j \leq m} ,
    \end{equation}
    where $D={\rm{diag}}(p_1,\cdots,p_m)$.
    Since $A$ and $B$ are similar, they have the same eigenvalues.
     The result is obtained by applying the Gershgorin circle theorem in \cite{brualdiRegionsComplexPlane1994} to $B^{T}$.
\end{proof}

We now show that the open balls of $A_k$ exclude the origin, implying that all eigenvalues are distinct from zero.
\begin{thm}\label{thm: eigenvalue}
    Given the set $(m, (s_j)_{j=1}^m)$ and an integer $k$,  there exist test functions $v_{k,1},\cdots,v_{k,m}\in \mathcal{H}_k$ , such that the matrix $A_k$ is invertible.
\end{thm}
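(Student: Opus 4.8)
The plan is to use the eigenvalue localization provided by Lemma~\ref{lemma:circle} and to choose the complex vectors $\{\rho_l^{(k)}\}_{1\le l\le m}$ so that the Gershgorin-type balls of $A_k$ all stay away from the origin. Recall from Lemma~\ref{lemma:vectorb} that, for a fixed $k$, any real vector $a$ with $|a|\ge\sqrt{|k\pi|}$ can be completed to a complex vector $\rho=a+b\mathrm{i}$ with $\rho\cdot\rho=2k\pi\mathrm{i}$, i.e.\ $\alpha_k+\kappa\rho\cdot\rho=0$ after absorbing the constant $\kappa$ appropriately; so we have a whole family of admissible $\rho$'s whose real parts we may pick almost freely, subject only to a lower bound on the length. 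The diagonal entry of $A_k$ is $a^{(k)}_{j,j}=e^{\rho_j^{(k)}\cdot s_j}$, which has modulus $e^{a_j\cdot s_j}$ where $a_j=\mathrm{Re}\,\rho_j^{(k)}$; the off-diagonal entry in column $j$ is $a^{(k)}_{l,j}=e^{\rho_l^{(k)}\cdot s_j}$, with modulus $e^{a_l\cdot s_j}$. The idea is the same separation-of-scales trick used in the uniqueness proof: make each $a_j$ point essentially in the direction of $s_j$ and with a length so large that $a_j\cdot s_j$ dominates $a_l\cdot s_j$ for all $l\neq j$.

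Concretely, I would proceed in the following steps. First, since the $s_j$ are distinct (hypothesis~\ref{H1}), for each $j$ choose a direction $e_j\in\mathbb{R}^d$ with $e_j\cdot s_j > e_j\cdot s_l$ for all $l\neq j$ — this is possible because $s_j$ is not a convex combination issue but simply that a generic hyperplane normal separates $s_j$ strictly from the finite set $\{s_l:l\neq j\}$; alternatively take $e_j$ proportional to $s_j-\bar s$ for a suitable reference point, or just perturb. Set $\delta := \min_j\big(e_j\cdot s_j-\max_{l\neq j}e_j\cdot s_l\big)>0$. Second, put $a_j = R\,e_j$ for a large parameter $R>0$ chosen at least as large as $\sqrt{|k\pi|}/\min_j|e_j|$ so that Lemma~\ref{lemma:vectorb} applies, and let $\rho_j^{(k)} = a_j + b_j\mathrm{i}$ be the resulting admissible complex vectors (distinctness of the $\rho_j^{(k)}$ is automatic once the $a_j$ are distinct, which holds for $R$ large since the $e_j$ are distinct directions). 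Third, apply Lemma~\ref{lemma:circle} with the weights $p_j := e^{a_j\cdot s_j} = e^{R\,e_j\cdot s_j}$. Then the radius of the $j$-th ball is
\[
p_j\sum_{l\neq j}\frac{|a^{(k)}_{l,j}|}{p_l}
= e^{R\,e_j\cdot s_j}\sum_{l\neq j} e^{R\,e_l\cdot s_j}\,e^{-R\,e_l\cdot s_l}
= \sum_{l\neq j} e^{R(e_j\cdot s_j + e_l\cdot s_j - e_l\cdot s_l)}.
\]
The exponent equals $R\,(e_j\cdot s_j) - R\,(e_l\cdot s_l - e_l\cdot s_j) \le R\,(e_j\cdot s_j) - R\delta$, so the radius is at most $(m-1)e^{R(e_j\cdot s_j)}e^{-R\delta} = (m-1)\,p_j\,e^{-R\delta}$, while the center has modulus $|a^{(k)}_{j,j}| = p_j$. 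Hence for $R$ large enough that $(m-1)e^{-R\delta}<1$, every ball $\mathcal{B}(a^{(k)}_{j,j},\text{radius})$ excludes the origin, so $0$ is not an eigenvalue of $A_k$ and $A_k$ is invertible. Finally, check that the resulting $v_{k,l}(x,t)=e^{\alpha_k t+\rho_l^{(k)}\cdot x}$ indeed lie in $\mathcal{H}_k$: the constraint $-\tfrac{2k\pi\mathrm{i}}{T^*}+\kappa\rho\cdot\rho=0$ pins down $\rho\cdot\rho$ up to the constant $\kappa/T^*$, and Lemma~\ref{lemma:vectorb} (applied with $k$ rescaled by this constant) produces exactly such $\rho$; the membership in $L^2((0,T);H^1(\Omega))$ is immediate since $\Omega$ is bounded.

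The main obstacle I anticipate is the bookkeeping around the admissibility constraint from Lemma~\ref{lemma:vectorb}: the constraint forces a relation between $a_j=\mathrm{Re}\,\rho_j^{(k)}$ and $b_j=\mathrm{Im}\,\rho_j^{(k)}$, and one must confirm that scaling $a_j=R e_j$ to large $R$ keeps us inside the admissible regime (it does, since the hypothesis $|a|\ge\sqrt{|k\pi|}$ becomes \emph{easier} to satisfy as $R$ grows, and $b_j$ can be taken of comparable length without affecting the modulus estimates, which depend only on real parts). A secondary point to handle carefully is ensuring the $m$ complex vectors $\rho_j^{(k)}$ are genuinely \emph{distinct} — this is needed for the $v_{k,l}$ to be $m$ different test functions — but for generic choice of the separating directions $e_j$ (or after an arbitrarily small perturbation) the real parts $R e_j$ are pairwise distinct, which suffices. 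Everything else is a direct application of the two preceding lemmas and elementary exponential estimates.
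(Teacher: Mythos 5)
Your overall strategy --- Lemma~\ref{lemma:vectorb} to build admissible $\rho_l^{(k)}$ with prescribed real parts, then Lemma~\ref{lemma:circle} with exponential weights $p_j$ to push the Gershgorin balls away from the origin --- is exactly the paper's strategy. However, your very first step contains a genuine gap. You require, for \emph{each} $j$, a direction $e_j$ with $e_j\cdot s_j > e_j\cdot s_l$ for all $l\neq j$, and you assert this holds for a ``generic hyperplane normal.'' That is false unless $s_j$ is an exposed point of the convex hull of $\{s_1,\dots,s_m\}$. Take three collinear sources, say $s_1=(0,0)$, $s_2=(1,0)$, $s_3=(2,0)$: for every $e$ one has $e\cdot s_2=\tfrac12(e\cdot s_1+e\cdot s_3)\le\max(e\cdot s_1,e\cdot s_3)$, so no separating direction $e_2$ exists, and neither your ``proportional to $s_j-\bar s$'' fallback nor a small perturbation repairs this (hypothesis~\ref{H1} only guarantees distinctness, not general position). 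Since your $\delta>0$ is defined from these directions, the whole dominance estimate collapses in such configurations.

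The paper avoids this entirely by a quadratic rather than linear separation device: it takes $\mathrm{Re}\,\rho_l = r_k s_l$ (so $e_l$ is forced to be $s_l$ itself, after shifting the domain so all $s_j\neq 0$) and, crucially, chooses the weights $p_j=e^{\frac{r_k}{2}|s_j|^2}$ rather than your $p_j=e^{a_j\cdot s_j}$. With that choice the ratio of the $j$-th ball's radius to its center's modulus becomes $\sum_{l\neq j}\exp\bigl(-\tfrac{r_k}{2}\,|s_j-s_l|^2\bigr)$ by completing the square, which is small for large $r_k$ using only $|s_j-s_l|\ge\delta>0$, i.e.\ only the distinctness of the sources. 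If you replace your step one by this choice of real parts and weights, the rest of your argument (the lower bound on $|a|$ needed for Lemma~\ref{lemma:vectorb}, handled by taking $r_k\ge \frac{1}{|s_m|}\sqrt{|k|\pi/T^*}$, the distinctness of the $\rho_l^{(k)}$, and membership in $\mathcal{H}_k$) goes through as you describe.
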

\begin{proof}
By shifting the domain, we can guarantee that the source locations $s_j$ are distinct from the origin. We then arrange them in the order:
\[
|s_1| \geq |s_2| \geq \cdots \geq |s_m| > 0.
\]
Given their distinctness, we can identify two positive constants, \(\delta\) and \(r\), satisfying:
\begin{equation}\label{distinction}
|s_j-s_l| \geq \delta > \sqrt{\frac{2\ln(m-1)}{r}} \qquad \forall j \neq l.
\end{equation}
Define $r_k$ as 
$$
r_k:= \max \left\{1, r,  \frac{1}{|s_m|}\sqrt{\frac{|k|\pi}{T^*}}\right\}.$$ 
Referring to Lemma \ref{lemma:vectorb}, a complex vector \(\rho_l\) exists such that: 
    \[
        \rho_l = r_k s_l+\mathrm{i}b_l,\quad\text{ and }\quad \rho_l\cdot \rho_l = \frac{2k\pi \mathrm{i}}{T^*}. 
    \]
    The associated test function is \(v_{k,l}(x,t) = e^{\alpha_kt + \rho_l \cdot x}\), and the matrix \(A_k = [a_{l,j}]_{1\leq l, j \leq m}\) with \(a_{l,j} = e^{\rho_l \cdot s_j}\).
    Here we omit the superindex $k$ in $\rho_l$ and $a_{l,j}$ for concise presentation.

    Selecting \(p_j = e^{\frac{r_k}{2}|s_j|^2}\) and for any \(z\) in the open ball \(\mathcal{B}(a_{j,j}, p_j\sum_{l\neq j}\frac{|a_{l,j}|}{p_l})\), we deduce:
    \begin{align*}
        |z| &\geq|a_{j,j}|- p_j\sum_{l\neq j}\frac{|a_{l,j}|}{p_l} \\
            & = \exp(r_k |s_j|^2) - \exp\left(\frac{r_k}{2}|s_j|^2\right)\sum_{l\neq j}\exp\left(\frac{r_k}{2}(2s_l\cdot s_j -|s_l|^2)\right) \\
            & = \exp\left(r_k |s_j|^2 \right)\left(1 - \exp\left(-\frac{r_k}{2}|s_j|^2\right)\sum_{l\neq j}\exp\left(\frac{r_k}{2}(2s_l\cdot s_j -|s_l|^2)\right)\right) \\
            & = \exp\left(r_k |s_j|^2\right)\left(1 - \sum_{l\neq j} \exp\left(-\frac{r_k|s_j-s_l|^2}{2}\right)\right) \\
            & \geq \exp\left(r_k |s_j|^2\right)\left(1-(m-1)\exp\left(-\frac{r_k\delta^2}{2}\right)\right) \\
            & \geq 1-(m-1)e^{-\frac{r\delta^2}{2}}.
    \end{align*}
    To summarize, we have
    \begin{equation}\label{rad}
        |z| \geq C(m;\delta,r):= 1-(m-1)e^{-\frac{r\delta^2}{2}}>0 \qquad \forall \gamma\in \mathcal{B}_j \text{ for } 1\leq j \leq m.
    \end{equation}
    Given that any eigenvalue $\mu$ of $A_k$ resides in one of the aforementioned open balls (as per Lemma \ref{lemma:circle}), $\mu$ is distinct from zero. Consequently, $A_k$ is invertible.
\end{proof}

Now we are ready to provide a proof of Theorem \ref{thm:fourier} as follows.

\begin{proof}[Proof of Theorem \ref{thm:fourier}]
    For each integer $k$, choose the test functions $v_{k,l},$ for $l = 1, ..., m$ as described in Theorem \ref{thm: eigenvalue}. Then the resulting linear system $A_k \Lambda_k = \mathcal{R}_k$ possesses a unique solution $\Lambda_k$ since $A_k$ is invertible guaranteed by Theorem \ref{thm: eigenvalue}.
\end{proof} 

For $k=0$, the estimate on the eigenvalues in Theorem \ref{thm: eigenvalue} leads to a stability result concerning the total intensities of the sources. 
\begin{thm}\label{thm: stab}
    Given the set $(m, (s_j)_{j=1}^m)$, there exist test functions $v_{0,l}(x)$ for $1\leq l \leq m$ such that the resulting linear system $A_0\Lambda_0 = \mathcal{R}_0$ is stable in the sense that
    \begin{equation}
        ||\Lambda_0-\widehat{\Lambda_0}||_2\leq \frac{1}{C(m;\delta,r)}||\mathcal{R}_0-\widehat{\mathcal{R}_0}||_2,
    \end{equation}
where the pairs $(\Lambda_0, \mathcal{R}_0)$ and $(\widehat{\Lambda_0}, \widehat{\mathcal{R}_0})$ are associated to two sets of source intensities $(g_j)_{j=1}^m$ and $(\widehat{g_j})_{j=1}^m$, respectively.
\end{thm}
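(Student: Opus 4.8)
The plan is to reduce the claimed estimate to a single bound on $\|A_0^{-1}\|_2$ and then exploit a symmetry of $A_0$ that is available in dimension two. First I would observe that, once the test functions $v_{0,l}$ are fixed as in Theorem~\ref{thm: eigenvalue} (the case $k=0$, so $r_0=\max\{1,r\}$), the matrix $A_0=[e^{\rho_l\cdot s_j}]_{1\le l,j\le m}$ from \eqref{linear_lambda} depends only on $(m,(s_j)_{j=1}^m)$ and on the chosen complex vectors $\rho_l$ — not on the source intensities; only the right-hand side changes, since the entries of $\mathcal{R}_0$ are $\sum_{j}\lambda_j(\alpha_0)e^{\rho_l\cdot s_j}$ with $\lambda_j(\alpha_0)=\int_0^{T^*}g_j$. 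Hence the two instances of \eqref{linear_lambda} share the coefficient matrix $A_0$, and subtracting them gives $A_0(\Lambda_0-\widehat{\Lambda_0})=\mathcal{R}_0-\widehat{\mathcal{R}_0}$. It therefore suffices to establish, for a suitable choice of the $\rho_l$, that $\|A_0^{-1}\|_2\le 1/C(m;\delta,r)$.

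The crucial step is to pick the CGO exponents so that $A_0$ is Hermitian. In $\mathbb{R}^2$, among the admissible vectors attached to $s_l$ I would take $\rho_l=r_0 s_l+\mathrm{i}\,r_0 R s_l$, where $R$ is the fixed $90^\circ$ rotation of the plane; this satisfies $\rho_l\cdot\rho_l=0$ and the size condition of Lemma~\ref{lemma:vectorb} (vacuous when $k=0$), so it is a legitimate choice within the construction of Theorem~\ref{thm: eigenvalue}. Identifying a point $s_j=(s_j^1,s_j^2)$ with the complex number $z_j=s_j^1+\mathrm{i}s_j^2$, one computes $\rho_l\cdot s_j=r_0\,\overline{z_l}\,z_j$, hence
\[
\overline{(A_0)_{l,j}}=e^{r_0 z_l\overline{z_j}}=(A_0)_{j,l},
\]
that is, $A_0^{*}=A_0$. (Equivalently, writing $b_l=r_0 R s_l$ and using $R^{\top}=-R$, one checks $\overline{\rho_l\cdot s_j}=\rho_j\cdot s_l$.)

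With Hermiticity in hand the rest is immediate. A Hermitian matrix is normal, so its singular values equal the moduli of its (real) eigenvalues and $\|A_0^{-1}\|_2=1/\min_i|\mu_i(A_0)|$. By Theorem~\ref{thm: eigenvalue} — whose proof produces the estimate \eqref{rad}, taken here with $k=0$ — every eigenvalue $\mu$ of $A_0$ satisfies $|\mu|\ge C(m;\delta,r)>0$; note that the particular choice of $\rho_l$ above does not affect the moduli $|(A_0)_{l,j}|=e^{r_0 s_l\cdot s_j}$ on which that estimate rests. Therefore $\|A_0^{-1}\|_2\le 1/C(m;\delta,r)$, and
\[
\|\Lambda_0-\widehat{\Lambda_0}\|_2=\|A_0^{-1}(\mathcal{R}_0-\widehat{\mathcal{R}_0})\|_2\le\frac{1}{C(m;\delta,r)}\,\|\mathcal{R}_0-\widehat{\mathcal{R}_0}\|_2 .
\]

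I expect the main obstacle to be precisely the passage from the spectral localization to the bound on $\|A_0^{-1}\|_2$. The Gershgorin argument of Lemma~\ref{lemma:circle} and Theorem~\ref{thm: eigenvalue} controls only where the eigenvalues of $A_0$ lie, whereas $\|A_0^{-1}\|_2$ is governed by the smallest \emph{singular} value, and for a general non-normal matrix these two quantities can be arbitrarily far apart — so extra structure is genuinely required. Recognizing the Hermitian symmetry, a two-dimensional phenomenon (for $k=0$ the null exponents are complex multiples of a single vector $(1,\mathrm{i})$, up to conjugation), is what forces the two to coincide. If one wanted to avoid this and work in general dimension, the fallback would be to retain the stronger column--diagonal-dominance estimate $|b_{j,j}|-\sum_{l\ne j}|b_{l,j}|\ge C(m;\delta,r)$ for $B=D^{-1}A_0 D$ with $D=\mathrm{diag}\big(e^{\frac{r_0}{2}|s_j|^2}\big)$, which yields $\|B^{-1}\|_1\le 1/C(m;\delta,r)$ by a Varah-type bound, and then absorb the conditioning of $D$ into the constant — at the cost of weakening the stated inequality.
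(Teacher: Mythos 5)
Your proposal is correct and follows essentially the same route as the paper: you choose $\rho_l = r_0 s_l + \mathrm{i}\,r_0 R s_l$ with $R$ a fixed $90^\circ$ rotation, show that $A_0$ is Hermitian (the paper verifies $\overline{\rho_l\cdot s_j}=\rho_j\cdot s_l$ via the identity $(b_l+b_j)\cdot(s_l+s_j)=0$ rather than your complex-number identification, but this is the same observation), and then combine normality with the Gershgorin eigenvalue bound \eqref{rad} to get $\|A_0^{-1}\|_2\le 1/C(m;\delta,r)$. Your closing remark correctly identifies why the Hermitian structure is the essential ingredient rather than a convenience.
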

\begin{proof}
Construct the test functions $v_{0,l}$ analogously to Theorem \ref{thm: eigenvalue}, i.e., $\rho_l = r_0s_l + \mathrm{i}b_l$.
We aim to show that $A_0$ is a Hermitian matrix, i.e. $$\rho_l\cdot s_j=\overline{\rho_j}\cdot s_l\quad \text{ for } 1\leq l, j \leq m.$$
Thanks to Lemma \ref{lemma:vectorb}, we have for $1\leq l \leq m$,
\begin{equation*}
\rho_l \cdot \rho_l=0.
\quad \Longleftrightarrow \quad
|b_l|=r_0|s_l|, b_l \cdot s_l=0.
\end{equation*}
In fact, $b_l$ is a (clockwise) rotation of $r_0s_l$ with an angle of $\pi/2$.
It follows that
    \begin{equation*}
        (b_l+b_j)\cdot(s_l+s_j)=0 \qquad \text{ for } 1\leq j, l \leq m.
    \end{equation*}
Thus, \(b_l\cdot s_j + b_j \cdot s_l = 0\) for \(1 \leq j, l \leq m\). Further, we can express 
\begin{align*}
\rho_l\cdot s_j - \overline{\rho_j}\cdot s_l
=& (r_0s_l + \mathrm{i}b_l)\cdot s_j - (r_0s_j - \mathrm{i}b_j)\cdot s_l \\
=& r_0s_l\cdot s_j - r_0 s_j\cdot s_l + \mathrm{i}(b_l\cdot s_j + b_j\cdot s_l) = 0. 
\end{align*}
Consequently, $A_0$ is Hermitian, and so is its inverse $A_0^{-1}$. Letting $\mu$ represent the eigenvalue of $A_0$, by \eqref{rad}, we have
\[
\|A_0^{-1}\|_2 = \max_{\mu} \left|\frac{1}{\mu}\right| \leq \frac{1}{C(m;\delta, r)}.
\]
This directly yields
\begin{equation*}
\|\Lambda_0-\widehat{\Lambda_0}\|_2 = \|A_0^{-1}(\mathcal{R}_0-\widehat{\mathcal{R}_0})\|_2 \leq \frac{1}{C(m;\delta, r)}\|(\mathcal{R}_0-\widehat{\mathcal{R}_0})\|_2.
\end{equation*}
\end{proof}

\begin{remark}
    The assertions of Theorem \ref{thm: stab} still hold if test functions are chosen such that the associated matrix $A_0$ is normal.
\end{remark}

\section{Numerical algorithm}\label{section:algorithm}
This section introduces the proposed reconstruction algorithm. Assumption~\ref{H2} confirms that all intensity functions $g_j(t)$ are real, leading to the relation
$$\Lambda_k=\bar{\Lambda}_{-k}.$$
Consequently, it suffices to recover $\Lambda_k$ for $k \geq 0$. 
Using the linear system for the Fourier coefficients of the $k$-th mode as described in \eqref{linear_lambda}, we formulate the inversion of the number of sources $m$, the Fourier coefficients of intensity functions $\Lambda_{k}$, and the source locations $S = (s_1, \ldots, s_m)^T$ as the following optimization problem:
\begin{equation}\label{minmization}
    \min _{m, \, S, \, \Lambda_{k}} \left(\sum_{0\leq k \leq K}\left\|A_k( m,S)\Lambda_{k}-\mathcal{R}_{k}\right\|^2_2 \right).  
\end{equation}

To enhance efficiency and accuracy, we split this problem into two sub-problems. Drawing inspiration from the invertibility established in Theorem~\ref{thm: eigenvalue} for $k=0$ and the stability result in Theorem~\ref{thm: stab}, we first determine the number of sources, their locations, and total intensities by addressing:
\begin{equation}\label{lambda_zero}
    (m, S, \Lambda_0) = \operatorname*{\arg\min}_{ m\leq M, S, \Lambda_0} \left\|A_0( m, S )\Lambda_{0}-\mathcal{R}_{0}\right\|^2_2. 
\end{equation}

Assuming the number of sources does not exceed $M$, we employ $3M$ test functions $v_{0,l}, 1\leq l \leq 3M$, which are defined as: 
\begin{equation*}
v_{0,l}(x,t)=\exp(\rho_{0,l} \cdot x).    
\end{equation*}
\begin{equation*}
\rho_{0,l}=\frac{2\sqrt{2}}{\textrm{diag}(\Omega)}\bigg((\cos{\beta_l},\sin{\beta_l})+\mathrm{i}(\cos{\gamma_l},\sin{\gamma_l})\bigg), \beta_l=\frac{2j\pi}{M}, \gamma_l=\beta_l-\frac{\pi}{2},
\end{equation*}

Subsequently, we isolate the reconstruction process for the Fourier coefficients based on the value of $k$, reconstructing the coefficients for each frequency by resolving:
\begin{equation}\label{lambda_nonzero}
A_k(m,S)\Lambda_{k}=\mathcal{R}_{k}, \quad k = 1,2,\dots , K.
\end{equation}

For $k>0$, we only need no more than $M$ test functions and we will give the definition of them in Section \eqref{subsection:fourier inverse}.

We can now outline the algorithm as follows:
\begin{enumerate}
\item[(i)] Given two integers $K$ and $M$, construct the CGO-type test functions $v_{0,l}, 1\leq l \leq 3M$ and  $v_{k,l}, 1\leq k \leq K, 1\leq l \leq M$. Compute the approximation to the boundary control $\omega$ using the bases $\{\omega_\eta\}$ given in Sec. \ref{subsection: boundary control}. 
\item[(ii)] Compute the functional $\mathcal{R}_0 = \big(\mathcal{R}(v_{0,l})\big)_{1\leq l \leq 3M}$ and $\mathcal{R}_k = \big(\mathcal{R}(v_{k,l})\big)_{1\leq l \leq M}$ in Equation \eqref{RvCompt}. Solve the minimization problem \eqref{lambda_zero} to obtain the set $(m, S, \Lambda_0)$.
\item[(iii)] With known $(m, S)$, solve the linear systems \eqref{lambda_nonzero} to obtain $\Lambda_k, 1\leq k\leq K$, and then perform the inverse Fourier transform to retrieve $g_j, 1\leq j \leq m$.
\end{enumerate}

Next we will address the implementation of each step.

\subsection{Efficient computation of boundary control $\omega$ and $\mathcal{R}(v)$} \label{subsection: boundary control}

For our numerical method, a substantial number of calculations for $\mathcal{R}(v)$ are required. Solving the minimization problem of boundary control \eqref{sys_psi_ini}, as proposed in \cite{CarthelEtAl1994}, can be computationally intensive as it involves solving the initial-boundary value problem of the heat equations repeatedly.
To mitigate this computational demand, especially when computing a large number of boundary controls, we introduce a novel method that enhances efficiency in these scenarios.
This method centers around the construction of a set of basis functions to approximate the space $L^2(\Sigma^+)$.
Each basis function within this set is characterized by a triplet of indices, $\eta = (\eta_1, \eta_2, \eta_3)$ with $\eta_j = 1, 2, ..., W_j, j=1,2,3$, where $W_j$'s are given positive integers.

Specifically, for a spatial domain $\Omega=[0,L_x]\times[0,L_y]$, we define $\omega_\eta \in L^2(\Sigma^+)$ as
\begin{equation*}
\omega_\eta(x,y,t)=\cos\left(\frac{2\eta_3-1}{2}\pi\frac{t-T^*}{T-T^*}\right)
\left\{\begin{array}{ll}
-\kappa_2\sin\left(\frac{\eta_1\pi}{L_x}x\right) \frac{\eta_2\pi}{L_y}& \text{ for } y = 0, \\
\kappa_2\sin\left(\frac{\eta_1\pi}{L_x}x\right) \frac{\eta_2\pi}{L_y}(-1)^{\eta_2}& \text{ for } y = L_y, \\
-\kappa_1\sin\left(\frac{\eta_2\pi}{L_y}y\right) \frac{\eta_1\pi}{L_x}& \text{ for } x = 0, \\
\kappa_1\sin\left(\frac{\eta_2\pi}{L_y}y\right) \frac{\eta_1\pi}{L_x}(-1)^{\eta_1}& \text{ for } x = L_x. 
\end{array}\right.
\end{equation*}

For each basis function $\omega_{\eta}$, there is an associated function $\psi$ leading to $\psi_{\eta}(T^*)$, which can be determined using a standard PDE (Partial Differential Equation) solver for (\ref{sys_psi_ini}a-c). 
With this set of functions $\{\psi_{\eta}(T^*)\}$, we can solve the minimization function with respect to $v(T^{*})$ as
\begin{equation}\label{controlapproximation}
\{a_{\eta}\} = \operatorname*{\arg\min}_{\{\hat{a}_{\eta}\}} \left\|\sum_{\eta}\hat{a}_{\eta}\psi_{\eta}(T^{*})-v(T^{*})\right\|^2_{L^2(\Omega)} .   
\end{equation}
The approximation of boundary control is then given by $\omega \approx \sum\limits_{\eta}a_{\eta}\omega_{\eta}$.

Our methodology offers a significant reduction in computational time by requiring the PDE solver only a limited number of times to determine $\psi_{\eta}(T^*)$. The function approximation problem \eqref{controlapproximation} is a typical least-squares problem and several methods such as QR decomposition are derived for it. We specifically employ the LSQR method for an efficient solution. 

Crucially, the computation of the bases $\psi_{\eta}(T^*)$ is independent of both the boundary measurement $\varphi$ and the test function $v$. 
This ensures that their precomputation, albeit initially time-intensive, becomes a one-time offline process. As a result, not only do subsequent changes in the test functions lead to minor online computational adjustments, but modifications in the measured data also demand minimal online recalculations. This highlights a dual advantage: for a given spatial domain, after performing the offline computation once, we can conduct multiple measurements and run the inversion—which requires the boundary controls—rapidly. 
This level of efficiency is a marked improvement over the traditional methods described in \cite{CarthelEtAl1994}, highlighting the advantages and versatility of our proposed approach.

After introducing our new methodology for efficient boundary control computation, we can outline the specific steps as follows:
\begin{itemize}
    \item[(i-1)] Given each boundary control basis $\omega_{\eta}$, obtain $\psi_{\eta}(T^*)$ from the system (\ref{sys_psi_ini}a-c).
    \item[(i-2)] Given the test functions $v_{0,l}, 1\leq l \leq 3M$, solve \eqref{controlapproximation} by the LSQR method.
    \item[(i-3)] Obtain $\sum _{\eta}a_{\eta}\omega_{\eta}$ as the approximation to the boundary control $\omega$.
\end{itemize}
For $1\leq k \leq K$, the computation of the boundary control of $v_{k,l}$ can be performed analogously. Also, it is worth noting that step (i) is performed offline.

\subsection{Identification of source number $m$ and source location $S$}\label{subsection:minimization}

In this section, we discuss the process of recovering source number $m$, source location $S$ and the zeroth mode Fourier coefficients $\Lambda_0$. We begin with substituting the boundary control obtained in step (i) into \eqref{RvCompt} to obtain $\mathcal{R}_0$. Our next objective is to solve \eqref{lambda_zero}. This task presents two major challenges. Firstly, the source number $m$ remains unknown, demanding a strategy for its identification. Secondly, the gradient-based algorithm to solve \eqref{lambda_zero} is prone to convergence towards undesirable local minima.

To mitigate these difficulties, we construct the following numerical algorithm.
\begin{itemize}
 
     \item[(ii-1)] Use the boundary control from step (i) to compute $\mathcal{R}_0$. 
     \item[(ii-2)] Initialize the source number as $M$ in \eqref{lambda_zero}. Subsequently use the Sequential Least SQuares Programming(SLSQP) method to solve \eqref{lambda_zero}.
    \item[(ii-3)] Use the random sampling method to bypass the local minimum. In practice, we examine several random initial guesses for the source location and the zeroth intensity. The solution with the minimum relative error from these trials is accepted as the genuine solution.
    \item[(ii-4)] Assess the results. If two points are near each other or certain points exhibit significantly low intensities, some redundancy or non-existence of the points is implied. If $N$ denotes the number of these extraneous points, then $M$ is adjusted as $M=M-N$. We then revert to the previous step (ii-2). Otherwise, the algorithm finishes in the next step (ii-5).
    \item[(ii-5)] Then the source number is defined as $m=M$, and the source position is determined as $S$.
\end{itemize}

In step (ii-2), fixing the source count aids in translating the minimization problem to one involving continuous variables, paving the way for gradient-based optimization techniques like SLSQP. The relatively modest complexity of equation \eqref{lambda_zero} ensures a fast minimization, with the repeated minimizations in step (ii-3) causing only a negligible increase in computational time. Step (ii-4) is important, offering a systematic strategy for identifying the source number, seamlessly aligning with practical scenarios.  

\subsection{Computation of Fourier coefficients for $k>0$}\label{subseciont: recover intensity}
In Section~\ref{subsection:minimization}, we obtain the source number $m$, the source location $S$ and the zeroth mode coefficients $\Lambda_0$. Based on the inverse Fourier transform, we are ready to state the first intensity recovery method, referring to it by Inverse Fourier Method. 

From a theoretical standpoint, for any $k>0$, there exist $m$ test functions $v_{l}(x,t) \in \mathcal{H}_k$ such that the matrix $A_k=(v_{l}(s_j,0))_{1\leq l , j \leq m}$ is invertible. This is proved in Theorem \ref{thm: eigenvalue}. With \eqref{linear_lambda}, we can obtain the $k$-th mode Fourier coefficients. 

In our numerical method, the intensity function's recovery hinges on the inverse Fourier transform, which might induce the Gibbs phenomenon near a jump discontinuity at $T^*$. To mitigate this, we extend $g_j$ to be periodic over $[-T^*,T^*]$ and employ the test functions:
\begin{equation}\label{extend}
v_{k}(x,t)=\exp(\rho_{k} \cdot x)\exp\Big(-\frac{k\pi \rm{i}}{T^{*}}t\Big).    
\end{equation}

For $k\geq 1$, at most $M$ test functions $v_{k,l}, 1\leq l \leq M$ are required. The spatial vectors $\rho_{k,l}$ are defined as:
\begin{equation*}
\rho_{k,l}=r_k\bigg((\cos{\beta_l},\sin{\beta_l})+\mathrm{i}(\cos{\gamma_l},\sin{\gamma_l})\bigg), \beta_l=\frac{2l\pi}{M}, \gamma_l=\beta_l-\arccos{\frac{k\pi}{2r_k^2 T^{*}}},
\end{equation*}
where $r_k$ is a parameter depends on the size of space domain $\Omega$ and the index of the Fourier mode $k$. 
If $r_k$ is excessively large, the minimization problem and the computation of $\mathcal{R}(v)$ become unstable. Conversely, if $r_k$ is too small, the definition of $\rho_{k,l}$ is not valid. Typically, we select:
\begin{equation*}
r_k=\max\left\{\frac{2\sqrt{2}}{\textrm{diag}(\Omega)}, \sqrt{\frac{k\pi}{2T^*}}\right\}.
\end{equation*}
Analogous to \eqref{fouriers}, the inversion of intensity functions is based on:
\begin{equation*}
    g_j(t) \approx \frac{1}{2T^*}\sum_{k}\lambda_j\left(\frac{k\pi}{T^*}\right)e^{-\frac{k\pi{\rm{i}}}{T^*}t} \qquad \text{ for } j = 1, \ldots, m. 
\end{equation*}
\begin{remark}
    The test functions used in Section \ref{subsection:minimization} and \ref{subseciont: recover intensity} are independent of the exact source locations $S$ and the source number $m$. This independence allows for the advance preparation of boundary controls for these test functions.
\end{remark}
\subsection{Approximation of intensity functions}
To improve the accuracy of the recovery results, we introduce an alternative method for the reconstruction of the intensity functions, referring to it by Approximation Method. Similar to Section \ref{subseciont: recover intensity}, the source number $m$ and source location $S$ are obtained in Step (ii). With this information, we aim to choose several bases of intensity functions, denoted as $h_l$ for $0 \leq l \leq 2L$ with $L$ being a positive integer. Suppose $f_{l,j}(x,t)=\delta(x-s_j)h_l(t)$ and we can define the function $\varphi_{l,j}$ that satisfies the direct problem
\begin{align}\label{basis of intensity}
\left\{
\begin{array}{ccll}
\partial_t u - \kappa\Delta u&=& f_{l,j} &\text{ in } \,Q, \\
u(x,0) &=& 0 &\text{ in } \,\Omega, \\
\kappa\frac{\partial u}{\partial n} &=& 0 &\text{ on } \Sigma, \\
u(x,t) &=& \varphi_{l,j} &\text{ on } \Sigma.
\end{array}
\right.
\end{align}
From the above system we obtain the bases $\varphi_{l,j}$ for the boundary measurement $\varphi$. Consequently, the following minimization problem arises: 
\begin{equation}\label{gtapproximation}
\{b_{l,j}\}=\operatorname*{\arg\min}_{\{\hat{b}_{l,j}\}} \left\|\sum_{0\leq l \leq 2L, 1\leq j \leq m}\hat{b}_{l,j}\varphi_{l,j}-\varphi\right\|^2_{L^2(\Sigma)}, 
\end{equation}
The recovery of intensity functions is then expressed as
\begin{equation}\label{gt_approximation}
    g_j(t)=\sum_{l=0}^{2L}b_{l,j}h_l(t).
\end{equation}

In real scenarios, the source intensity functions are non-negative functions. Therefore, post-reconstruction necessitates the truncation of the negative segment.
Given that $\hat{g}(t)$ represents the intensity function procured via the aforementioned recovery technique in \eqref{gt_approximation}, the definitive result $g(t)$ is formulated as
\begin{equation*}
g(t)=
\left\{\begin{array}{ll}
\hat{g}(t) & \text{ if } \hat{g}(t) \geq 0, \\
0& \text{ if } \hat{g}(t) < 0, 
\end{array}\right.
\qquad \text{for} \quad 0 \leq t \leq T^*.
\end{equation*}
In practice, the basis functions in \eqref{gt_approximation} are chosen as
\begin{equation*}
    h_{2l}=\cos\Big(\frac{l\pi }{T^*}t\Big), \quad h_{2l+1}=\sin\Big(\frac{l\pi }{T^*}t\Big) \qquad \text{ for } l = 0, \ldots, L.
\end{equation*}
To summarize, the algorithm presented in this subsection follows these steps:
\begin{itemize}
    \item [(iv-1)] Set $L$ and determine the intensity function basis $h_l$ for $0\leq l\leq 2L$.
    \item [(iv-2)] With $m$ and $S$ obtained in Step (ii), solve the heat equations \eqref{basis of intensity} for $0\leq l \leq 2L$.
    \item [(iv-3)] Use LSQR to solve the problem \eqref{gtapproximation}.
    \item [(iv-4)] Reconstruct the nonnegative intensity functions by \eqref{gt_approximation} and the post-reconstruction. 
\end{itemize}
\begin{remark}
    The Approximation Method needs extra online computational time compared with method in Section \ref{subseciont: recover intensity}. This is due to the necessity of solving the heat equations \eqref{basis of intensity} $(2L+1)m$ times.
\end{remark}
\section{Numerical simulations}\label{section:results}
For all numerical tests, we set $\Omega = [0,1000]\times [0,1000]$ m$^2$ with a constant diffusion coefficient $\kappa = 1$ m$^2$/s. 
The inactive moment $T^*$ is set to 38.4 hours (equivalent to 138240 seconds) while the total observation duration $T$ is 48 hours (equivalent to 172800 seconds). The intensity functions utilized in the numerical tests, when expressed in seconds, are as follows:

\begin{equation*}
\begin{aligned}
& q(t)=\frac{1}{2}\Bigg[1-\tanh \bigg(\frac{t-0.9 T^*}{21600} \bigg)\Bigg];  \\
& g_1(t)=\left[3+\frac{3}{2} \sin \left(\frac{2 \pi t }{T^*} \right)\!+\!3 e^{-\left(\frac{10}{T^*}(t-36000)\right)^2}\!+\!\frac{5}{2} e^{-\left(\frac{15}{T^*}(t-100500)\right)^2}\right] q(t); \\
& g_2(t)=\Bigg[5+\sin \left(\frac{4 \pi t}{T^*}\right)\Bigg] q(t); \qquad g_3(t)=6\left[1-\left(\frac{t}{T^*}\right)^2\right] q(t);\\
&  g_4(t)=5 q(t);\qquad g_5(t)=g_6(t)=4 q(t).
\end{aligned}    
\end{equation*}

\subsection{Boundary control reconstruction and calculation of $\mathcal{R}(v)$}
    The approximation method in Section \ref{subsection: boundary control} is used to calculate boundary control $\omega$. We choose the basis function index $W_1=W_2=W_3=10$. Additionally, the maximal LSQR iteration step in Step (i-2) is defined to be $1000$. The numerical results of the approximation of $v(T^*)$ and corresponding $\mathcal{R}(v)$ are shown in Figure \ref{pic:rv}. The relative error of $v(T^*)$ is given by $\frac{||v(T^*)-\tilde{v}||_{L^2(\Omega)}}{||v(T^*)||_{L^2(\Omega)}}$ and the relative error of $\mathcal{R}(v)$ is given by $\frac{|\mathcal{R}(v)-\tilde{\mathcal{R}}(v)|}{|\mathcal{R}(v)|}$, where $\Tilde{v}$ and $\Tilde{\mathcal{R}}(v)$ are the numerical approximations. The approximations of $v(T^*)$ and $\mathcal{R}(v)$ show a good agreement, which are both $\mathcal{O}(10^{-3})$. This demonstrates the effectiveness of our new method to compute the boundary control.
\begin{figure}[t]
		\centering
		\begin{minipage}{0.48\textwidth}
			\subfigure[Relative error of $v(T^*)$ ]{\includegraphics[width=0.98\columnwidth]{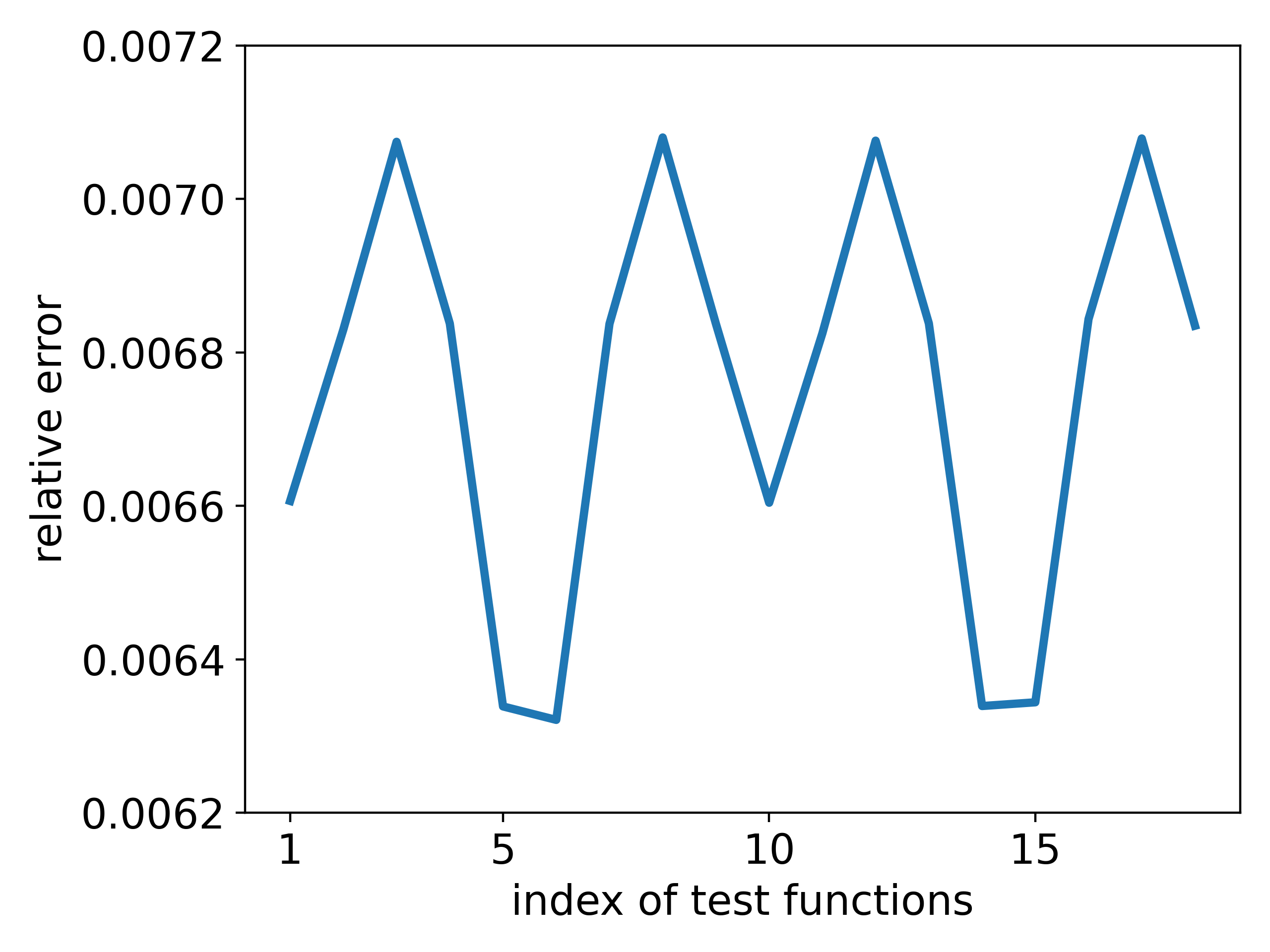}}\label{subfig:rv}
		\end{minipage}
	\begin{minipage}{0.48\textwidth}
			\subfigure[Relative error of $\mathcal{R}(v)$]{\includegraphics[width=0.98\columnwidth]{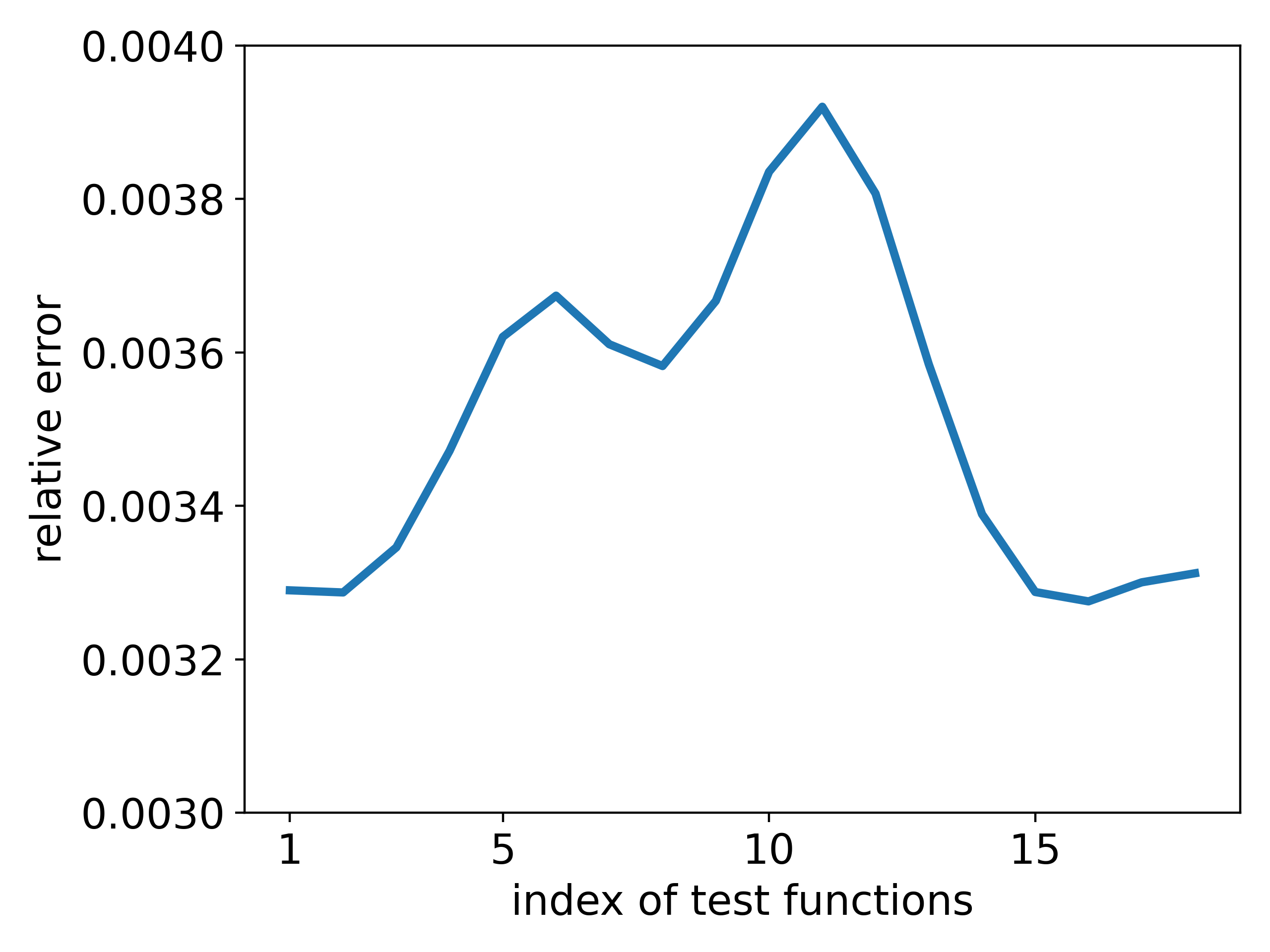}}\label{subfig:rverror}
		\end{minipage}
		\caption{Approximation Results of Boundary Control}
		\label{pic:rv}
\end{figure}
\subsection{The Identification of the Source Locations}\label{subsection: location}
We perform numerical simulations for Step (ii) to demonstrate the accuracy of the approximation on $m$ and $S$. For Step (ii-1), the initial guess of source number $M$ is set to 7 and we choose 50 random initial guesses in Step (ii-3). The recovery results of source locations are presented in Figure \ref{pic: position}. The position error is referred to as $\frac{||S-S_{\text{Iter}}||_2}{||S||_2}$, and the intensity error is referred to as $\frac{||\Lambda-\Lambda_{\text{Iter}}||_2}{||\Lambda||_2}$. The numerical results show that the algorithm correctly identifies the source number $m$. 
The recoveries of the source location $S$ and total intensities are also highly precise. 
For the two-point case in Figure \ref{pic: position}a, the reconstruction method handles the situation where the distance between two sources is 150 m. Generally, as the source number $m$ increases, the source number is accurately determined. But the reconstruction becomes more unstable, particularly for the total intensities of the sources. Additionally, we observe that the sources near the boundary are better recovered than those in the middle of the whole domain.

\begin{figure}[h]
        \centering
        \subfigure[Position error: 0.2\%, intensity error: 1.7\%.]{\includegraphics[width=0.50\hsize, height=0.40\hsize]{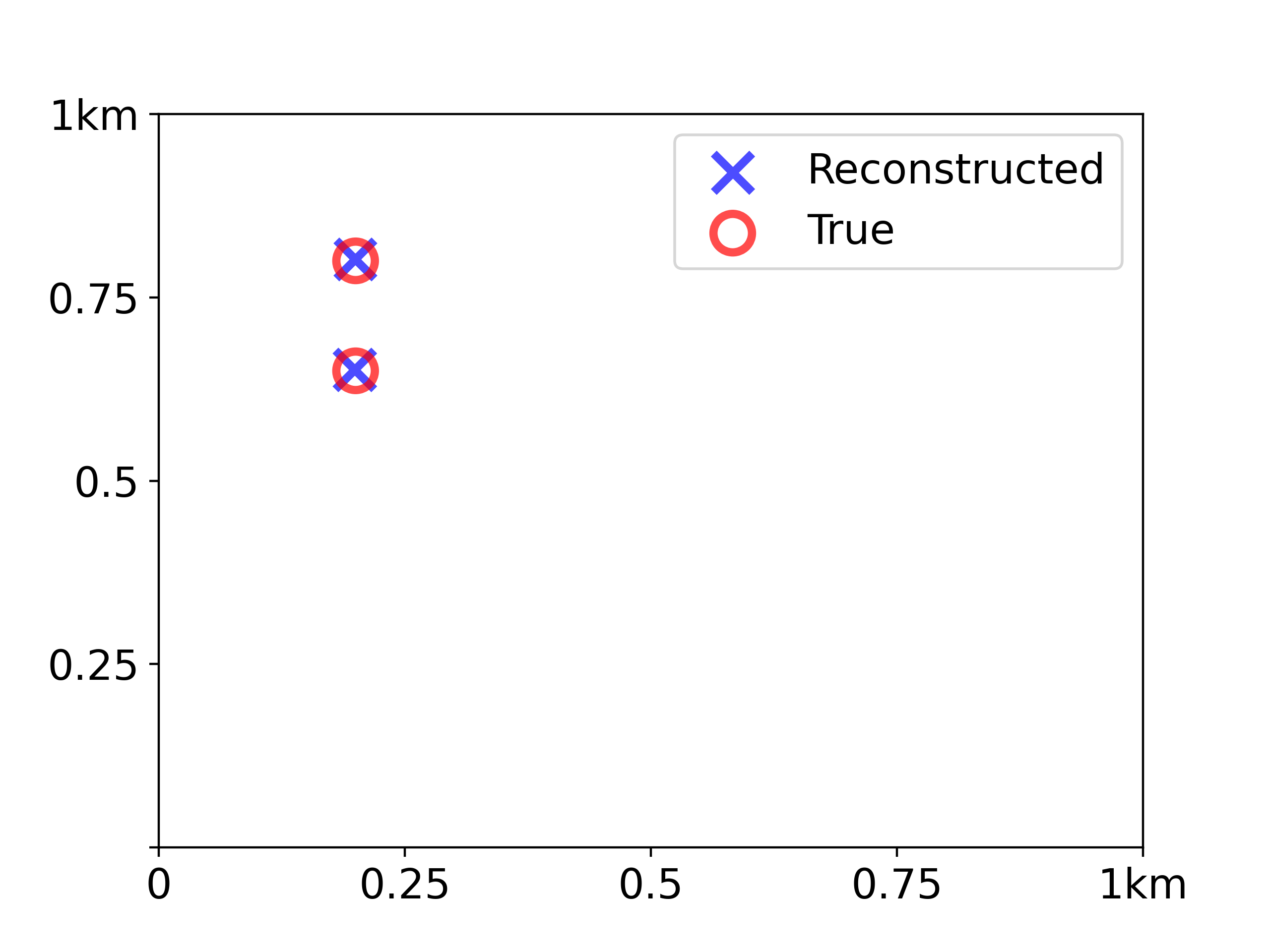}}\label{subfig:2points}
        \hspace{-3ex}
        \vspace{-2ex}
        \subfigure[Position error 0.3\%, intensity error 2.7\%.]{\includegraphics[width=0.50\hsize, height=0.40\hsize]{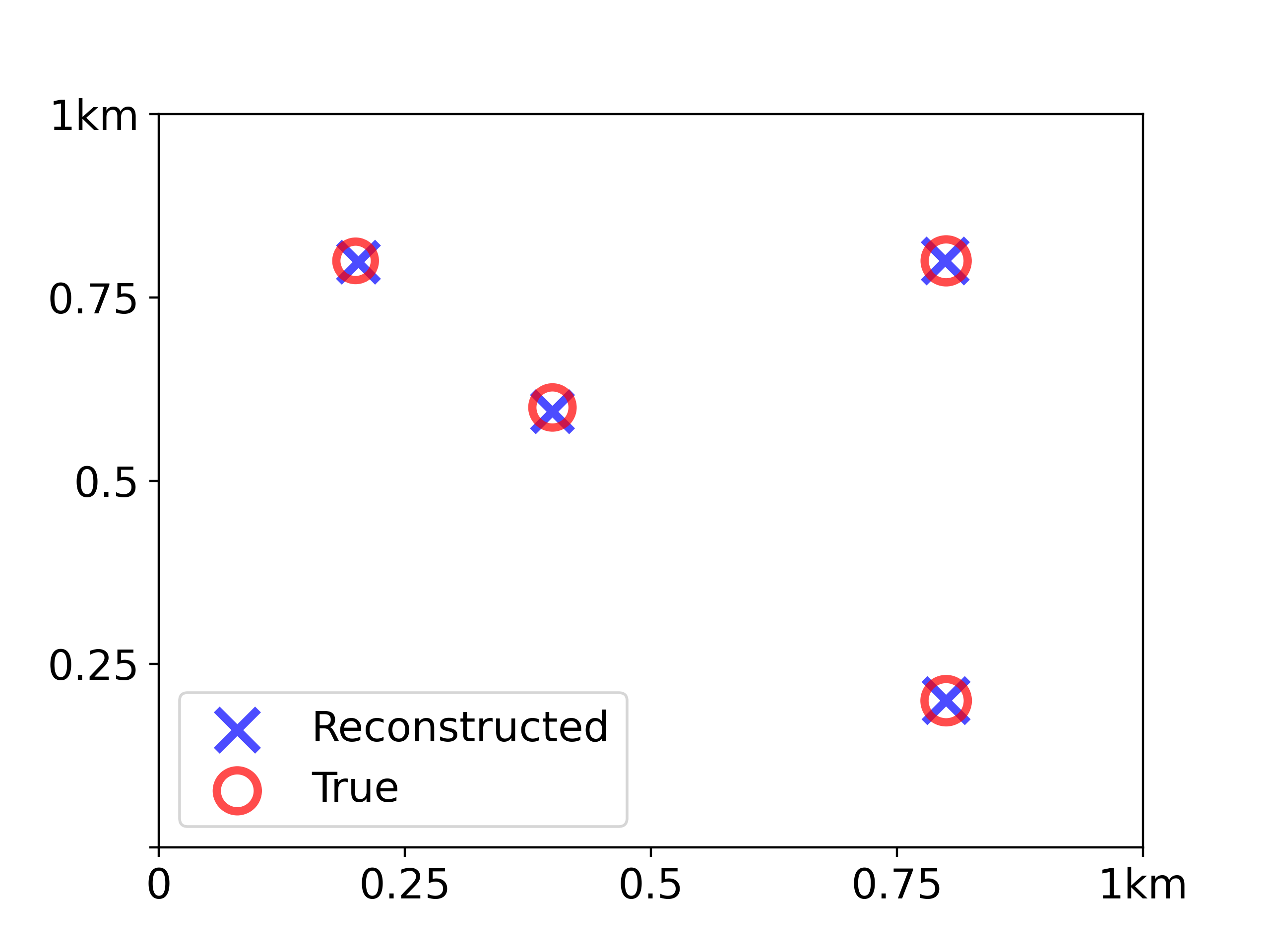}}\label{subfig:4points}
        \subfigure[Position error 0.4\%, intensity error 3.0\%.]{\includegraphics[width=0.50\hsize, height=0.40\hsize]{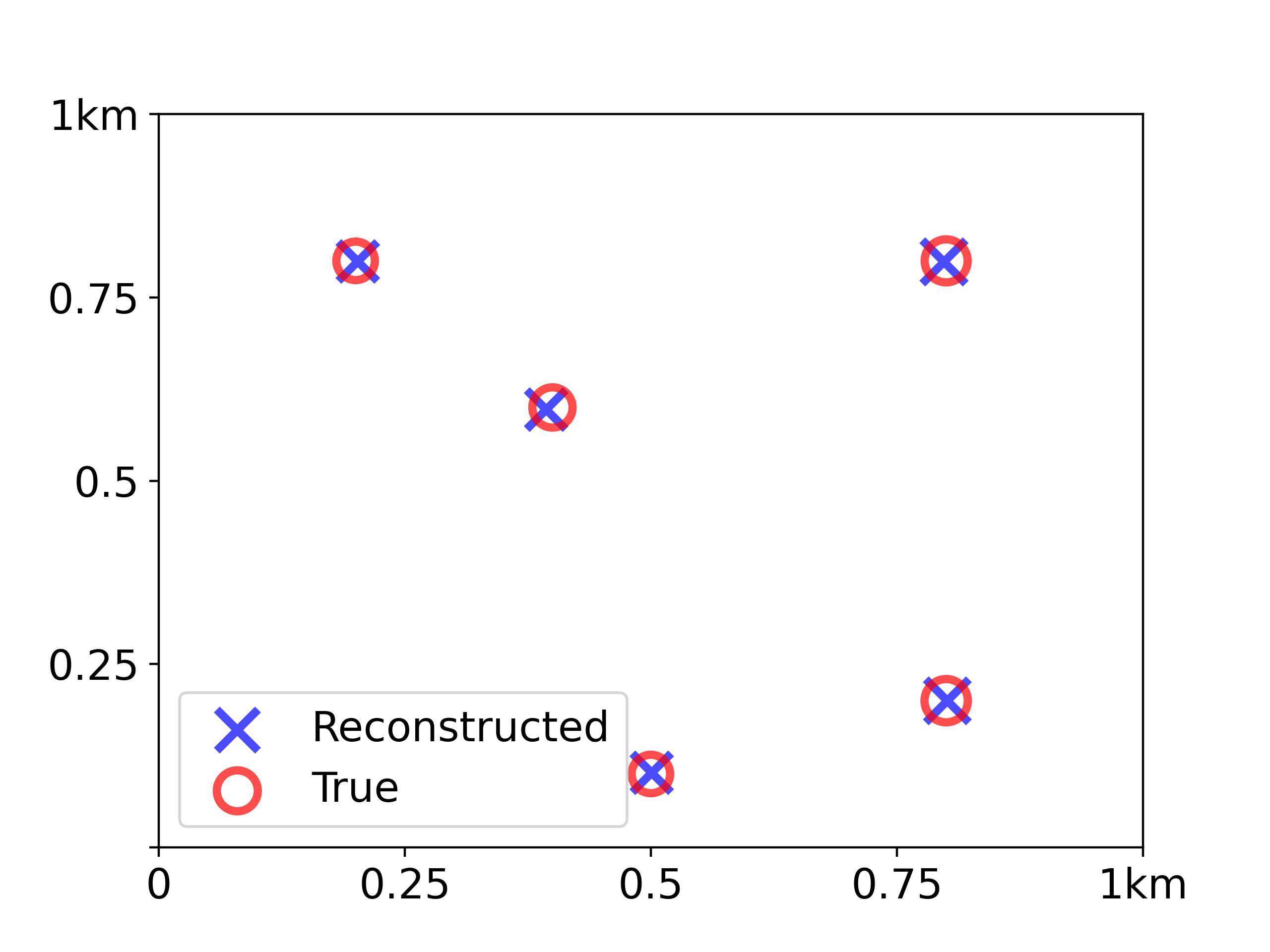}}\label{subfig:5points}
        \hspace{-3ex}
        \vspace{-2ex}
        \subfigure[Position error 6.7\%, intensity error 29.0\%.]{\includegraphics[width=0.50\hsize, height=0.40\hsize]{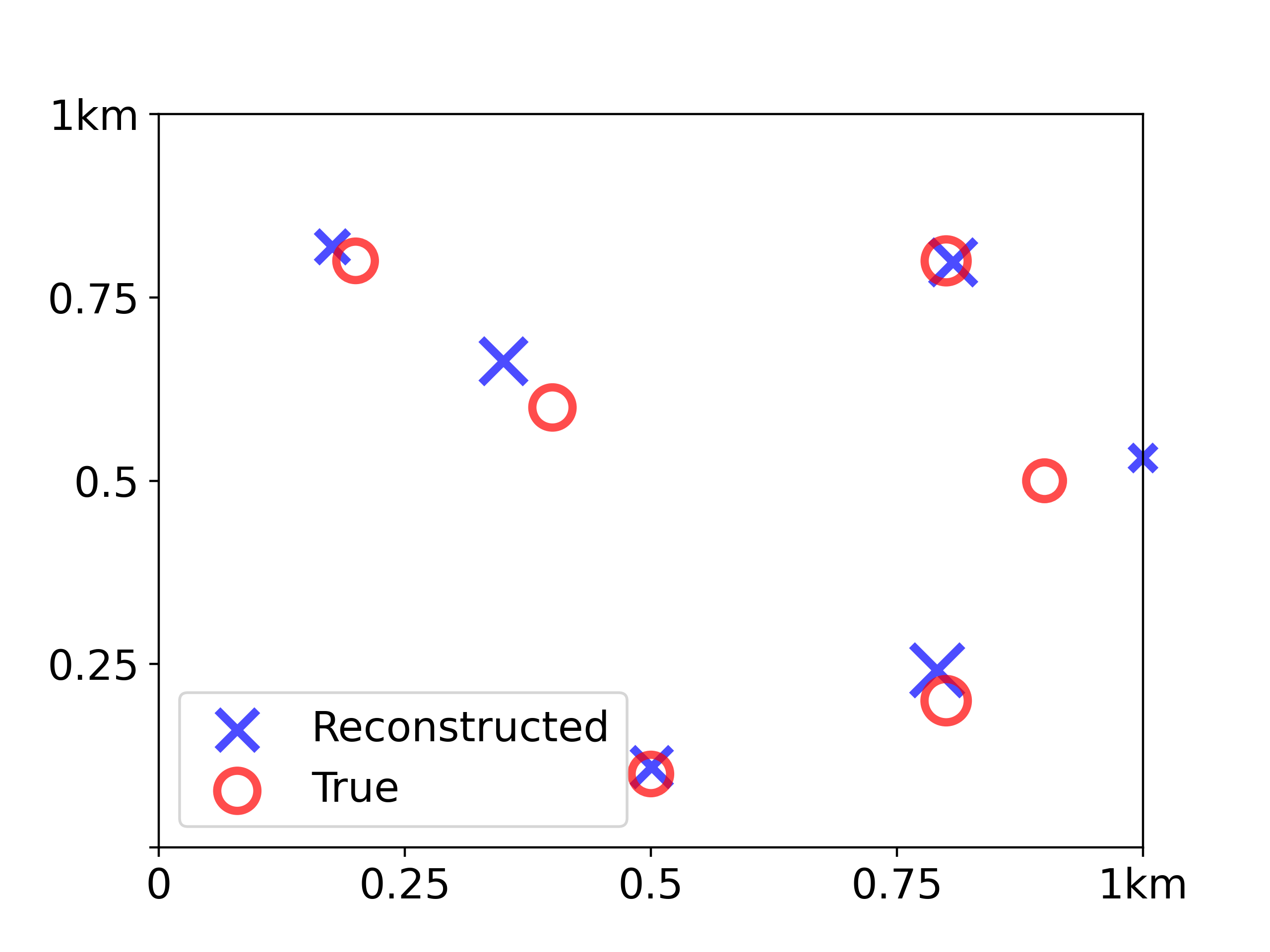}}\label{subfig:6points}
        \caption{Reconstruction of positions}
        \label{pic: position}
\end{figure}

\begin{table}[h]
    \label{table:6points}
    \centering
    \caption{Reconstruction of positions}
    \resizebox{13.5cm}{!}{
    \begin{tabular}{cccccccccccccccc} 
    \toprule 
    &&&& \multicolumn{2}{c}{Source 1}& \multicolumn{2}{c}{Source 2}&\multicolumn{2}{c}{Source 3}&\multicolumn{2}{c}{Source 4}&\multicolumn{2}{c}{Source 5}&\multicolumn{2}{c}{Source 6}\\
    \hline 
    \multirow{2}*{2 points case}&&\multicolumn{2}{c}{True position}&200 &800 &200 &650&&&&&&&&\\
    ~&&\multicolumn{2}{c}{Reconstructed position}&200 &802 &199 &651&&&&&&&&\\
    \hline\hline\noalign{\smallskip}
    \multirow{2}*{4 points case}&&\multicolumn{2}{c}{True position}&200 &800 &800 &200 &400 &600 &800 &800&&&&\\  
    ~&&\multicolumn{2}{c}{Reconstructed position}&203 &798& 800& 200& 400& 594& 799 & 800&&&&\\
    \hline\hline\noalign{\smallskip}
    \multirow{2}*{5 points case}&&\multicolumn{2}{c}{True position}&200 &800 &800 &200 &400 &600 &800 &800&500&100&&\\   
    ~&&\multicolumn{2}{c}{Reconstructed position}&202 &799& 801& 200& 394& 597& 798 & 799&501&102&&\\
    \hline\hline\noalign{\smallskip}
    \multirow{2}*{6 points case}&&\multicolumn{2}{c}{True position}&200 &800 &800 &200 &400 &600 &800 &800&500&100&900&500\\   
    ~&&\multicolumn{2}{c}{Reconstructed position}&176 &819& 790& 241& 350& 663& 807 & 798&501&109&1000&531\\
    \bottomrule
    \end{tabular}}
\end{table}

\subsection{Recovery of intensity functions' Fourier coefficients}\label{subsection:fourier inverse}

Leveraging the source locations of $5$-points case, discussed in Section~\ref{subsection: location}, where the intensity functions are given by $g_1(t), \cdots, g_5(t)$, we continue to present our algorithm. 
The recovery method, grounded in Fourier coefficients, specifies that $|k| \leq K = 8$ during Step (iii), followed by truncating any negative components. 
The results from this inversion process can be seen in Figure \ref{fig: figure5} under the title `I-F Method' and the $L^2$ relative errors of the recovered intensity functions are tabulated in Table \ref{table:intensity functions}.

The outcomes demonstrate a satisfactory recovery of the intensity functions. The two peaks of both $g_1$ and $g_2$ are accurately determined. 
Apart from $g_3(t)$, the relative $L^2$ errors of the other four intensity functions remain less than $10\%$. 
Such observations underscore the notion that the accuracy in intensity function recovery is closely linked to the precision of source location recovery. 
Specifically, given that the accuracy of the location $s_3$ is slightly poorer than the others, the resulting intensity function exhibits a notable disagreement. Additionally, it is noteworthy from Figure \ref{fig: figure5} that the recovery performance at the endpoint surpasses that at the starting point. This discrepancy can be associated with the extension phase of the intensity functions during Step (iii).

\begin{figure}[htp]
    \centering
    \subfigure{\includegraphics[width=.23\hsize, height=0.28\hsize]{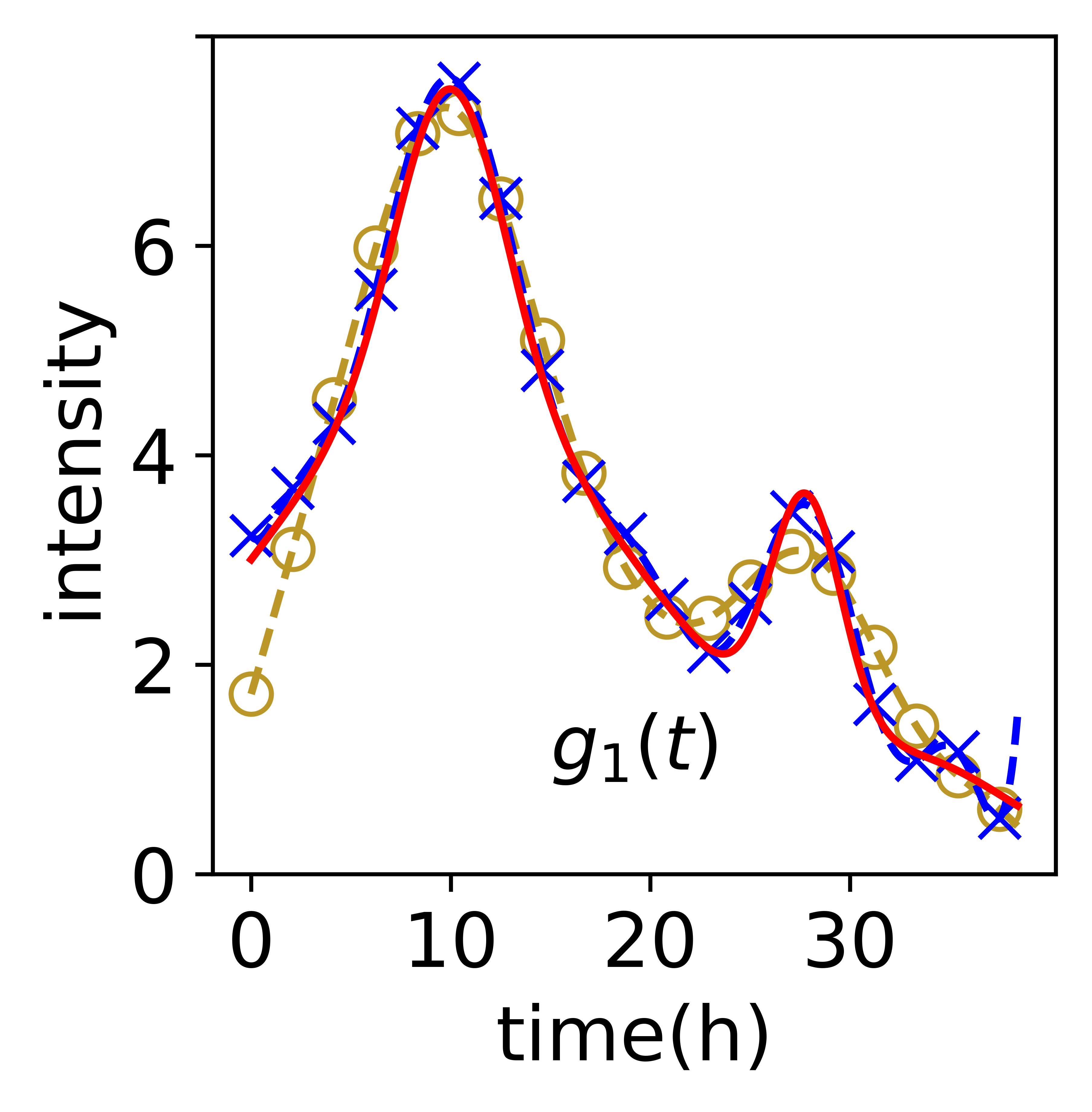}\label{fig: sub_figure1}}%
    \subfigure{\includegraphics[width=0.1925\hsize, height=0.28\hsize]{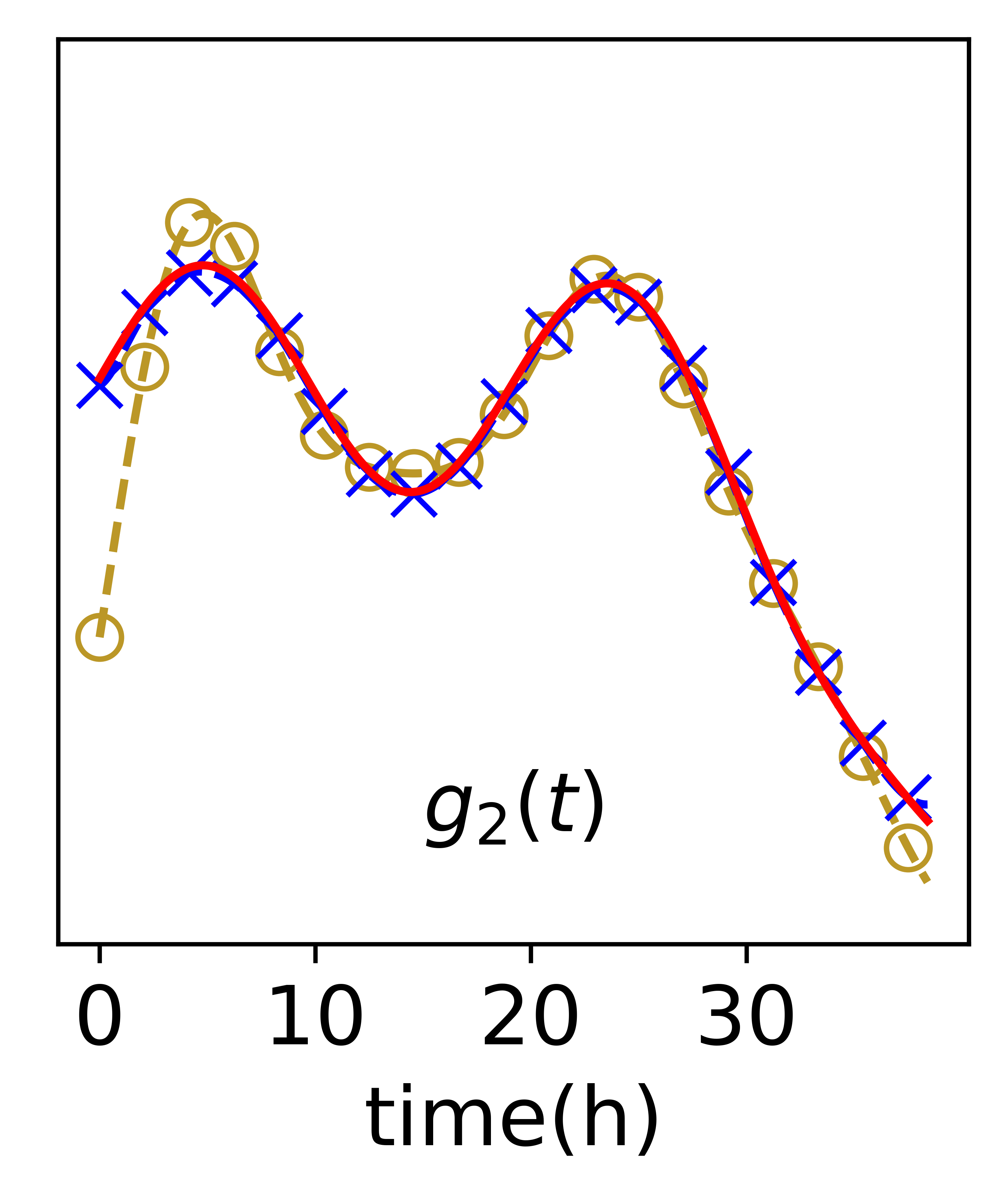}\label{fig: sub_figure2}}%
    \subfigure{\includegraphics[width=0.1925\hsize, height=0.28\hsize]{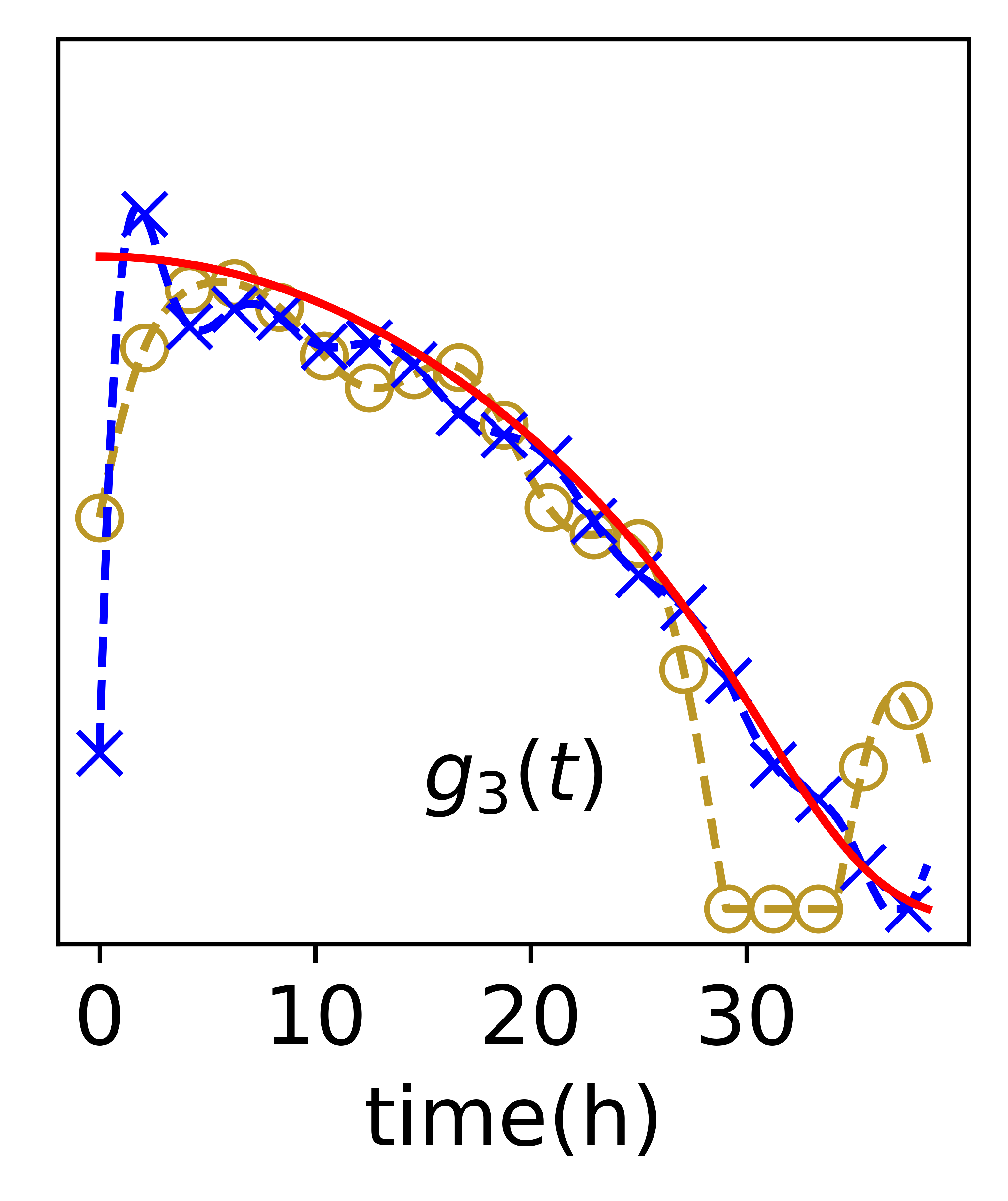}\label{fig: sub_figure3}}%
    \subfigure{\includegraphics[width=0.1925\hsize, height=0.28\hsize]{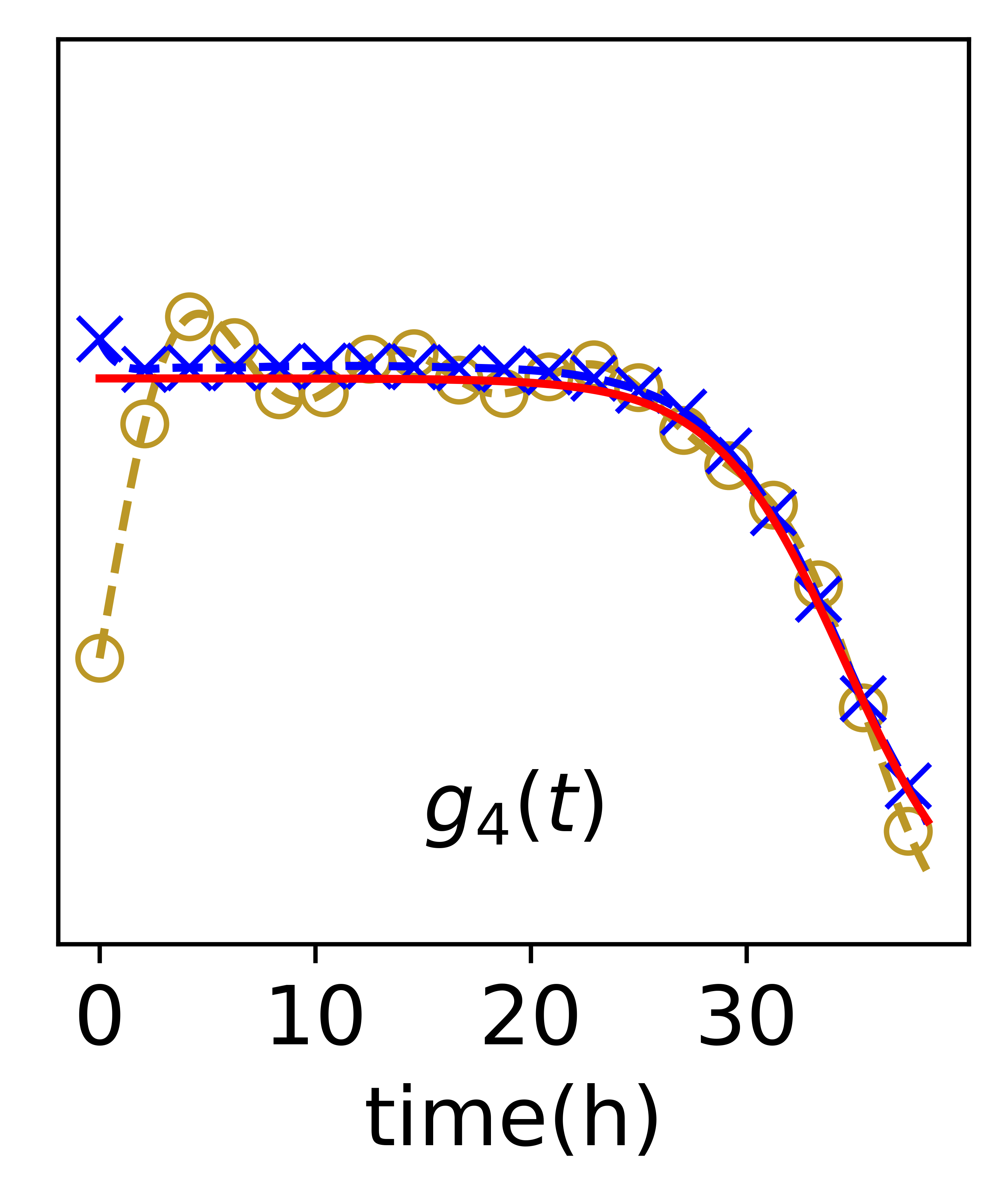}\label{fig: sub_figure4}}%
    \subfigure{\includegraphics[width=0.1925\hsize, height=0.28\hsize]{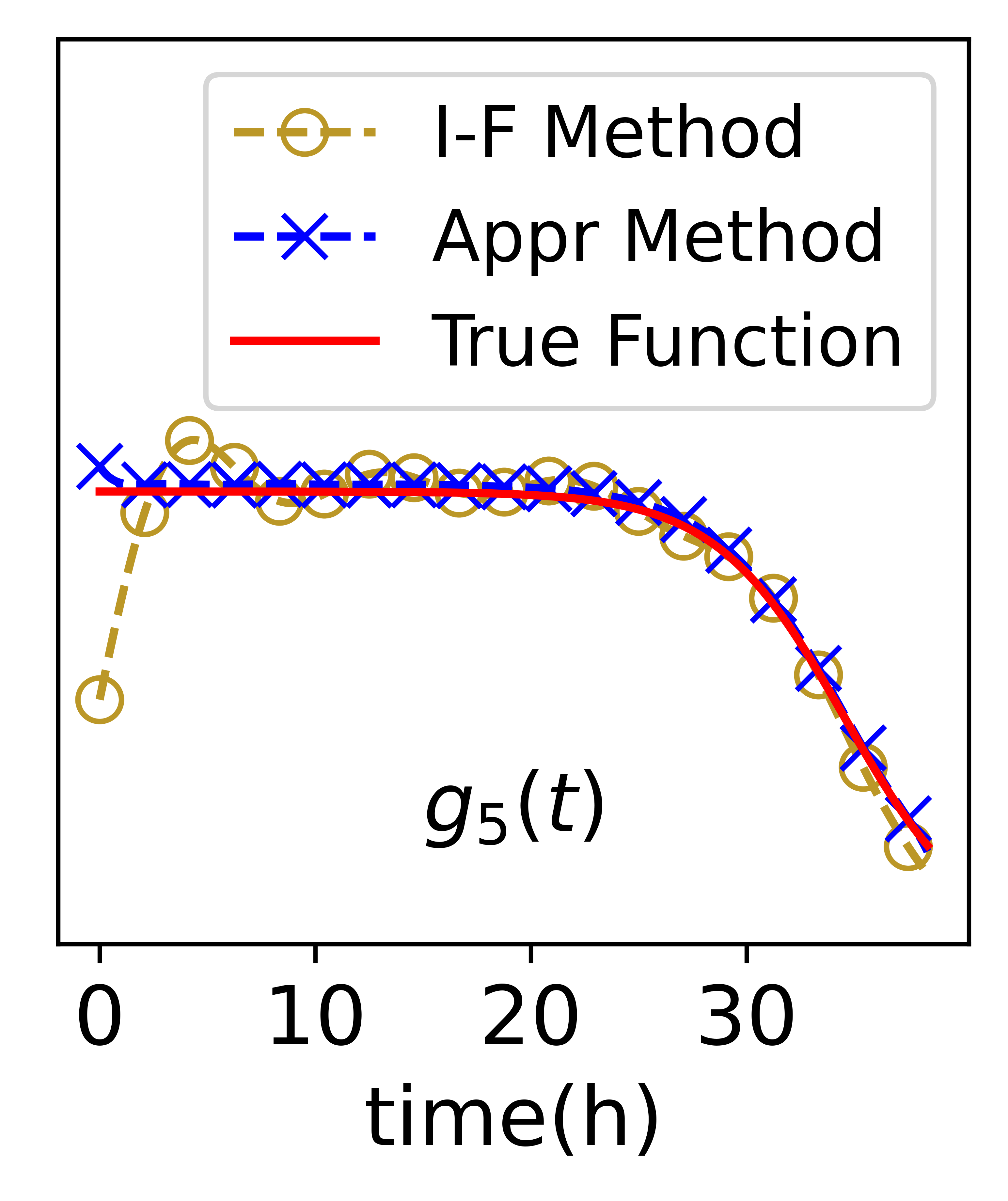}\label{fig: sub_figure5}}
    \caption{Reconstruction of intensity functions}
    \label{fig: figure5}
\end{figure}

\begin{table}[h]
    
    \centering
    \caption{Reconstruction error of intensity functions}
    \resizebox{8cm}{!}{
    \begin{tabular}{ccccccc} 
    \toprule 
    && \multicolumn{1}{c}{$g_1(t)$}& \multicolumn{1}{c}{$g_2(t)$}&\multicolumn{1}{c}{$g_3(t)$}&\multicolumn{1}{c}{$g_4(t)$}&\multicolumn{1}{c}{$g_5(t)$}\\
    \hline 
    \multicolumn{2}{c}{I-F Method }&8.97\% &8.36\% & 20.94\% &8.97\% & 8.02\%\\   
    \hline 
    \multicolumn{2}{c}{Appr Method }&3.30\% &0.79\% & 10.28\% &2.13\% & 1.66\%\\
    \bottomrule
    \end{tabular}}
    \label{table:intensity functions}
\end{table}

\subsection{Refinement of intensity function recovery}

Building upon the $5$-point scenario described in Section \ref{subsection:fourier inverse}, we employ an enhanced approximation method for refining the intensity function. In this method, denoted as Step (iv-1), the number of intensity basis functions is set to $L=8$. The results are presented in Figure \ref{fig: figure5}, under the label ``Appr Method", and the $L^2$ relative errors of the recovered intensity functions are tabulated in Table \ref{table:intensity functions}. Compared to the recovery results from Step (iii), the approximation method in Step (iv) show significantly better results, especially in the recovery of the function $g_3$. A noteworthy observation is the considerable reduction of the Gibbs phenomenon in this refined method, which leads to a substantial decrease in relative errors during the recovery process.

\section{Conclusion}\label{section:conclusion}

In this paper, we address a water pollution traceability problem based on the convection-diffusion-reaction equation.
By employing dynamic CGO solutions, we transform the inverse problem into its weak form. We subsequently establish the uniqueness of the inverse problem and showcase the stability of the zeroth Fourier coefficient through an estimation of the coefficient matrix's norm.

To recover the source term, our proposed numerical method consists of three parts. It begins with the computation of boundary controls tied to the employed test functions. Following this, we identify both the count and positions of the pollutant sources. To round off the process, we introduce two methods for the recovery of intensity functions: one that deduces them through their Fourier coefficients and another that sets up a minimization framework for their reconstruction. 
The first one, referred to as the inverse Fourier approach, drastically shortens online computational time, but offers a mediocre recovery of $g_j$'s. In contrast, the approximation technique, while being more computationally intensive online, shines in its ability to reduce recovery errors in a great deal. These two methods can be flexibly used in practice based on specific scenarios. Compared to previous approaches, our numerical method is capable of effectively recovering cases with a greater number of sources, and the results for recovering the source positions and intensity functions are relatively successful.

Nevertheless, our method presents certain challenges.
Persistent questions remain regarding the stability of all Fourier coefficients and the accuracy of the method in determining a larger number of sources. In the future, we aim to investigate the analytical questions that are crucial to inverse problems and employ our approach for more complex cases in the meantime. Further exploration of this approach and its applications pertaining to time-fractional equations is intriguing as well.

\bibliographystyle{elsarticle-num} 
\bibliography{ref}

\end{document}